\newcommand{\ra}{\rangle}
\newcommand{\la}{\langle}
\newcommand{\F}{{\mathbb F}}
\newcommand{\Z}{{\mathbb Z}}
\newcommand{\PP}{\mathbb{P}}
\newcommand{\C}{{\mathbb C}}
\newcommand{\da}{\dot{\alpha}}
\newcommand{\db}{\dot{\beta}}
\newcommand{\md}{{\rm mod}\ }
\newcommand{\Int}{\mathrm{Int}\: }
\newcommand{\aut}{\mathrm{Aut}}
\newcommand{\Ga}{\mathrm{Gal}}
\newtheorem{thm}{Theorem}[section]
\newtheorem{lemma}[thm]{Lemma}
\newtheorem{prop}[thm]{Proposition}
\newtheorem{cor}[thm]{Corollary}
\newtheorem{Remark}[thm]{Remark}
\newcommand{\tz}{\tilde{z}}
\newcommand{\Aut}{\mathrm{Aut}\:}
\newcommand{\Out}{\mathrm{Out}\:}
\newcommand{\In}{\mathrm{Int}\:}
  \newcommand{\cput}[3]{%
    \makebox[0pt]{%
      \null\kern #1\raisebox{#2}[0pt][0pt]{\ensuremath{#3}}\hss
    }
  }
  \newcommand{\insertgraph}[1]{%
    \raisebox{-0.5\height}{
      \includegraphics[scale=1.0]{graph.#1}
    }%
  }
  \newcommand{\xgen}{\tilde{a}}
  \newcommand{\ygen}{\tilde{b}}
  \newcommand{\agen}{\tilde{w}}
  \newcommand{\bgen}{\tilde{u}}
  \newcommand{\cgen}{\tilde{v}}
  \newcommand{\ainv}{\agen^{-1}}
  \newcommand{\binv}{\bgen^{-1}}
  \newcommand{\cinv}{\cgen^{-1}}
  \newcommand{\BlueOne}{0}
  \newcommand{\BlueTwo}{1}
  \newcommand{\Green}{2}
  \newcommand{\GroupGenerated}[1]{\langle #1\rangle}
  \newcommand{\Sphere}[1]{\Sigma_0^#1}
  \newcommand{\Conf}[1]{\operatorname{Conf}_2(\Sphere{#1})}
  \newcommand{\SurfBot}{\pi_1(0 , 3)}
  \newcommand{\SurfTop}{\pi_1(0 , 4)}
  \newcommand{\LoopBot}{\gamma}
  \newcommand{\LoopTop}{\delta}
  \newcommand{\BasePt}{p}
  \newcommand{\acts}[2]{%\sideset{^{#1}}{}#2
    #1(#2)
  }
  \newcommand{\EllCurve}[1][*]{E_{#1}}
  \newcommand{\EllFiber}[1][*]{\EllCurve[#1]\setminus \{O\}}
  \newcommand{\firstdown}{\hat{a}}
  \newcommand{\seconddown}{\hat{b}}
  \newcommand{\firstup}{\hat{x}}
  \newcommand{\secondup}{\hat{y}}
  \newcommand{\DownSpace}{\PP^1 \setminus \{0, 1, \infty\}}
  \newcommand{\DownGroup}{\pi_1(\DownSpace)}
  \newcommand{\UpGroup}{\pi_1(\EllFiber[2])}
  \newcommand{\DownPath}{\gamma}
  \newcommand{\UpPath}{c}
  \newcommand{\FirstHomologyOf}[1]{\operatorname{H}_1(#1;\Z)}
  \newcommand{\FirstHomology}{\FirstHomologyOf{\EllFiber[2]}}
  \newcommand{\TheCircle}{S^1}
  \newcommand{\TheInterval}{[0,1]}
  \newcommand{\TheAnnulus}{\TheCircle\times\TheInterval}
  \newcommand{\TheHomotopy}{H}
  \newcommand{\TotalSpace}{S}
  \newcommand{\BaseSpace}{\PP^1 \setminus \{0, 1, \infty\}}
  \newcommand{\FiberSpace}{\EllFiber[2]}
  \newcommand{\concat}{\circ}
\begin{document}
\title[A group-theoretic proof of Asada's theorem]{The congruence subgroup
property for $\Aut F_2:$ \\ A group-theoretic proof of Asada's theorem}
\author[Bux]{Kai-Uwe Bux}
\address{Kai-Uwe Bux, Fakult\"at f\"ur Mathematik, Universit\"at Bielefeld, Postfach 100131,
33501 Bielefeld, Germany}
\email{bux\_\!\_\,2009@kubux.net}
\author[Ershov]{Mikhail V. Ershov}
\thanks{The second-named author would like to acknowledge partial support
from the {\small NSF} grant {\small DMS-0901703}.} 
\address{Mikhail Ershov, Department of Mathematics, University of Virginia,
Charlottesville, VA 22904, USA}
\email{ershov@virginia.edu}
\author[Rapinchuk]{Andrei S. Rapinchuk}
\thanks{The third-named author would like 
to acknowledge partial support from the {\small NSF} grant {\small DMS-0965758} and the Humboldt Foundation.}
\address{Andrei Rapinchuk, Department of Mathematics, University of Virginia,
Charlottesville, VA 22904, USA}
\email{asr3x@virginia.edu}

\subjclass[2000]{Primary 20F28, 20H05, Secondary 20E05, 20E07}
\keywords{automorphism groups, free groups, congruence subgroup property}

\thanks{First published in Groups Geom. Dyn. 5 (2011), no. 2, published by the European Mathematical Society}
\thanks{\copyright European Mathematical Society}

\maketitle

\centerline{\it To Fritz Grunewald}
\begin{abstract}
The goal of this paper is to give a group-theoretic proof
of the congruence subgroup property for $\Aut(F_2)$, the group of
automorphisms of a free group on two generators. This result was first
proved by Asada using techniques from anabelian geometry, and our
proof is, to a large extent, a translation of Asada's proof into
group-theoretic language. This translation enables us to simplify
many parts of Asada's original argument and prove a quantitative
version of the congruence subgroup property for $\Aut(F_2)$.
\end{abstract}
\section{Introduction}\label{S:I}

Let $G$ be a finitely generated group, $\Gamma = \Aut G$ be its
automorphism group. For a normal subgroup $K \subset G$ of finite
index, we set
$$
\Gamma[K] = \{ \sigma \in \Gamma \: \vert \: \sigma(K) = K \ \&  \
\sigma \ \text{acts trivially on} \ G/K \}.
$$
It is easy to see that $\Gamma[K]$ is a finite index subgroup of
$\Gamma.$ In fact, for $G = \Z^{\ell}$ we have $\Gamma =
GL_{\ell}(\Z),$ and furthermore if $K = n\Z^{\ell}$ then $\Gamma[K]
= GL_{\ell}(\Z , n),$ the congruence subgroup modulo $n.$ So, the
following question is a natural analog of the classical congruence
subgroup problem:

\vskip2mm

\hskip5mm {\it Does every finite index subgroup of $\Gamma$ contain
a suitable congruence subgroup $\Gamma[K]?$} \hfill (*)

\vskip2mm

\noindent While there are numerous results on the congruence
subgroup problem for arithmetic groups (cf. \cite{PR} for a recent
survey), very little is known regarding (*) for the automorphism
groups of general groups. The purpose of this note is to give a
short purely group-theoretic proof of Asada's result \cite{As} that
yields the congruence subgroup property (i.e., the the affirmative
answer to (*)) for $\Gamma = \Aut F_2,$ the automorphism group of
the free group $G = F_2$ of rank two. The original argument in
\cite{As} was based on the techniques involving Galois extensions of
rational function fields of algebraic curves (this area is generally
referred to as ``anabelian geometry"), and according to some
experts, no direct proof was known. The proof we present here is, by
and large, a ``translation" of Asada's argument into the
group-theoretic language. One of the benefits of the translation is
that some simplifications and shortcuts in Asada's argument became
apparent making the resulting argument very short and, in some
sense, even explicit (cf. \S \ref{S:E}). It also reveals the
underlying idea of the method (which we call the ``topsy-turvy
effect," see Remark 4.5) so that it can potentially be applied to
other automorphism groups and their subgroups.

Before formulating the result, we need to recall the standard
reformulation of (*) as a question about the comparison of two
topologies on $\Gamma.$ Let $\tau_{\mathrm{pf}}$ (resp.,
$\tau_{\mathrm{c}}$) be the topology on $\Gamma$ that admits the
family of all subgroups of finite index in $\Gamma$ (resp., the
family of congruence subgroups $\Gamma[K]$ for all finite index
subgroups $K \subset G$) as a fundamental system of neighborhoods of
the identity\footnote{We note that $\tau_{\mathrm{c}}$ can also be
defined using the congruence subgroups $\Gamma[K]$ associated only
to {\it characteristic} subgroups $K \subset G$ of finite index. For
such $K,$ $\Gamma[K] = \mathrm{Ker}\left(\Gamma \to
\Aut(G/K)\right),$ hence a {\it normal} subgroup of finite index in
$\Gamma.$}. Then $\tau_{\mathrm{c}}$ is {\it a priori} weaker than
$\tau_{\mathrm{pf}},$ and (*) amounts to the question if these
topologies are actually identical. Now, let $\widehat{\Gamma}$ and
$\overline{\Gamma}$ be the completions of $\Gamma$ relative to
$\tau_{\mathrm{pf}}$ and $\tau_{\mathrm{c}}$
respectively. Then yet another equivalent
reformulation of (*) is whether or not the natural map
$\widehat{\Gamma} \to \overline{\Gamma}$ is injective. Clearly,
$\widehat{\Gamma}$ is simply the profinite completion of $\Gamma.$
On the other hand, it is
easy to see that $\overline{\Gamma}$ can be identified with the
closure of the image of the natural homomorphism $\Aut G \to \Aut
\widehat{G}$ (cf. \S \ref{S:P}). So, our question becomes if the
natural map $\widehat{\Aut G} \to \Aut \widehat{G}$ is injective.
Here two remarks are in order. First, for a {\it profinite} group
$F$ we, of course, use $\Aut F$ to denote the group of {\it
continuous} automorphisms of $F;$ it is known however that if $F$ is
finitely generated then every abstract automorphism of $F$ is
automatically continuous \cite{NSe}. Second, for a finitely
generated profinite group $F,$ the automorphism group $\Aut F$ is
itself profinite (cf. \S \ref{S:P}), so the above homomorphism
$\widehat{\Aut G} \to \Aut \widehat{G}$ actually results from the
universal property of profinite completions applied to the
homomorphism $\Aut G \to \Aut \widehat{G}.$ Likewise, the outer
automorphism group $\Out \widehat{G} = \Aut \widehat{G} / \Int
\widehat{G}$ is also profinite, so the natural homomorphism $\Out G
\to \Out \widehat{G}$ extends to a continuous homomorphism
$\widehat{\Out G} \to \Out \widehat{G}.$ We can now formulate the
main result.

\vskip3mm

\noindent {\bf Main Theorem.} (cf. \cite{As}, Theorem 5) {\it For
the free group $F_2$ on two generators, the natural homomorphism
$\widehat{\Out F_2} \to \Out \widehat{F_2}$ is injective.}

\vskip3mm

The following is easily derived from the theorem (cf. Lemma
\ref{L:P1}).

\vskip3mm

\noindent {\bf Corollary.} {\it The natural homomorphism
$\widehat{\Aut F_2} \to \Aut \widehat{F_2}$ is injective, hence
$\Aut F_2$ has the congruence subgroup property.}

\vskip3mm

(We note that Asada's theorem was interpreted in \cite{AR}, 1.4.2,
as the statement that every finite index subgroup of $\Gamma =
\Aut F_2,$ containing $\Int F_2,$ must contain a suitable congruence
subgroup $\Gamma[K],$ but as we see, it in fact yields this property
for {\it all} finite index subgroups, cf. also Remark 5.3(3).)

\vskip3mm
The congruence subgroup property for $\mathrm{Aut}\: F_2$ can be used to establish 
the congruence subgroup property for certain subgroups (which are analogs 
of parabolic subgroups) of $\mathrm{Aut}\: F_n$ for $n\geqslant 3$ -- we will address this issue elsewhere. 
On the other hand, the congruence subgroup  problem for the group $\mathrm{Aut}\: F_n,$ $n \geqslant 3,$ 
itself remains widely open, and it does not appear that the argument for $n = 2$ can be easily extended to 
$n \geqslant 3.$ It is interesting that the proof for $n = 2$ relies on the fact that 
$\mathrm{Out}\: F_2 \simeq GL_2(\mathbb{Z})$ is a virtually free group, which is precisely  
what prevents $GL_2(\mathbb{Z})$ from having the (usual) congruence subgroup property. The latter
is a classical result known already to Klein and Fricke in the 19th century (cf. \cite{PR}).

\vskip3mm

The structure of the note is the following. In \S \ref{S:PF}, we
review the facts about profinite groups needed in the proof of the
Main Theorem. This section is included for the reader's convenience
as although these fact are known, for some of them it is not easy to
find an impeccable reference. More importantly, the proofs we
present, unlike the traditional proofs (cf., for example,
\cite{HRi}), are based not on the structure theory for profinite
groups but rather on the analysis of finite quotients of (discrete)
free groups and the associated relation modules. We will use this
approach to give an ``explicit" form of the Main Theorem in \S
\ref{S:E} (cf. Theorem \ref{T:E1}). After some reductions in \S
\ref{S:P}, we present the group-theoretic ``translation" of Asada's
argument in \S \ref{S:P2}. Finally, in \S \ref{S:T}, we discuss the
topological nature of a homomorphism involved in the proof of the
Main Theorem. This theme is prominent in Asada's paper, however the
explicit computation of this homomorphism in \S \ref{S:T} (as
opposed to its description in terms of Galois groups) reveals a
shortcut used in \S \ref{S:P2}.

\vskip4mm

\section{Facts about profinite groups}\label{S:PF}

We refer to \cite{RiZ} or \cite{W} regarding basic notions,
notations and results on profinite groups. In particular, the
profinite completion of an abstract (discrete) group $G$ will be
denoted by $\widehat{G}.$ Thus, if $F = F(X)$ is the free group on a
finite set $X$ then $\mathfrak{F} := \widehat{F}$ is the free
profinite group on $X$ (to be denoted $\mathfrak{F}(X)$). Given a
subset $S$ of a profinite group $\mathfrak{G},$ we will write
$\widehat{S}$ to denote its closure in $\mathfrak{G}.$
\begin{lemma}\label{L:PF1}
{\rm (cf. \cite{And}, Proposition 3)} Let
$$
1 \to G_1 \stackrel{\alpha}{\longrightarrow} G_2
\stackrel{\beta}{\longrightarrow} G_3 \to 1
$$
be an exact sequence of groups. Assume that $G_1$ is finitely
generated and that its profinite completion $\widehat{G}_1$ has
trivial center. Then the sequence of the profinite completions
$$
1 \to \widehat{G}_1 \stackrel{\widehat\alpha}{\longrightarrow}
\widehat{G}_2 \stackrel{\widehat\beta}{\longrightarrow}
\widehat{G}_3 \to 1
$$
is also exact.
\end{lemma}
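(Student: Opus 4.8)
The starting observation is that profinite completion is right-exact: applying it to the original short exact sequence kills nothing on the right, so $\widehat\beta$ is surjective and $\widehat\alpha(\widehat{G}_1)$ is normal in $\widehat{G}_2$ with quotient $\widehat{G}_3$. What is genuinely at stake is (a) the injectivity of $\widehat\alpha$, i.e. that the map from $\widehat{G}_1$ into $\widehat{G}_2$ does not collapse, and (b) exactness in the middle, i.e. that $\ker\widehat\beta$ is exactly the closure of the image of $G_1$, not something larger. The two standard ingredients are: finite generation of $G_1$, which guarantees that $\widehat{G}_1$ is the inverse limit over the finite quotients of $G_1$ and, crucially, that the topology on $\widehat{G}_1$ coming from its own finite-index subgroups agrees with the subspace topology induced from $\widehat{G}_2$ once we know $G_1$ has finite index in its ``saturation''—but here $G_1$ need not have finite index in $G_2$, so the relevant mechanism is instead the centerlessness hypothesis.

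First I would reduce to finite quotients. Because the completions are inverse limits of finite quotients, it suffices to show: for every finite-index normal subgroup $N_2 \trianglelefteq G_2$ there is a finite-index normal subgroup $N_1 \trianglelefteq G_1$ with $N_1 \supseteq N_2 \cap G_1$ such that, writing $\overline{G}_i = G_i/N_i$ (with $N_3 = \beta(N_2)$), the induced sequence $1 \to \overline{G}_1 \to \overline{G}_2 \to \overline{G}_3 \to 1$ is exact and these assemble compatibly. The only content is producing, cofinally, choices of $N_2$ for which $G_1 \cap N_2$ is a preimage under $G_1 \to \widehat{G}_1$ of an \emph{open} subgroup, i.e. controlling the restriction map. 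Equivalently, one shows the canonical map $\widehat{G}_1 \to \widehat{G}_2$ is a closed embedding by exhibiting enough finite quotients of $G_2$ that restrict onto prescribed finite quotients of $G_1$.

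The mechanism that makes this work—and where the centerlessness of $\widehat{G}_1$ enters—is the following. Given a characteristic finite-index subgroup $K \trianglelefteq G_1$, set $Q = G_1/K$ and consider the conjugation action of $G_2$ on $G_1$, hence on $Q$; this gives a homomorphism $G_2 \to \Aut(Q)$ whose restriction to $G_1$ is $G_1 \to \Aut(Q) = \Inn(Q) \ltimes \Out(Q)$ with image containing $\Inn(Q) \cong Q/Z(Q)$. One would like to push $G_1$ onto all of $Q$ by a quotient that $G_2$-conjugation respects; the obstruction is precisely $Z(Q)$, and the point of assuming $\widehat{G}_1$ has trivial center is that, passing to a cofinal system, one can arrange $Z(Q)$ to be ``invisible'': more precisely, $Z(\widehat{G}_1) = 1$ forces, for suitable cofinal $K$, that the center $Z(G_1/K)$ is trivial or at least can be separated, so that $G_1 \twoheadrightarrow Q$ extends to $G_2 \twoheadrightarrow Q \rtimes (G_3/\beta(\ker)) $-type quotient detecting $Q$ faithfully inside $\widehat{G}_2$. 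Concretely: centerlessness of $\widehat{G}_1$ means the action of $G_2/G_1 \hookrightarrow \Out(\widehat{G}_1)$ lifts uniquely through $\Aut(\widehat{G}_1) \to \Out(\widehat{G}_1)$ on the relevant finite levels, letting one build a compatible family of finite quotients of $G_2$ whose kernels meet $G_1$ cofinally — this simultaneously yields injectivity of $\widehat\alpha$ and exactness in the middle.

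\textbf{The main obstacle.}  The crux is the last step: showing that finite quotients of $G_1$ can be ``spread out'' cofinally to finite quotients of $G_2$. Right-exactness and surjectivity of $\widehat\beta$ are formal; the hard part is ruling out that $\widehat\alpha$ has a kernel or that $\ker\widehat\beta$ is strictly larger than $\overline{\im\alpha}$, and both collapse phenomena are governed by the center of $\widehat{G}_1$ — a nontrivial element of $Z(\widehat{G}_1)$ would exactly measure the failure. So the technical heart is converting the global hypothesis $Z(\widehat{G}_1) = 1$ into the cofinal finite-level statement about $Z(G_1/K)$ and the existence of the extending quotients of $G_2$; this is the step I would expect to require the most care, and it is presumably where the cited reference (\cite{And}, Proposition 3) does its work.
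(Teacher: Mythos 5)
The paper's own proof is considerably more direct than what you sketch, and several of your intermediate claims are either wrong or not the right mechanism. You have correctly identified the key ingredients — the conjugation action of $G_2$ on a finite quotient $G_1/K$, the role of characteristic subgroups, and the need for centerlessness to prevent collapse — but the route you propose through them is both more complicated than necessary and, in places, incorrect.

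Two specific problems. First, you write $\Aut(Q) = \Inn(Q)\ltimes\Out(Q)$; the sequence $1\to\Inn(Q)\to\Aut(Q)\to\Out(Q)\to 1$ does not split in general, and nothing in the argument needs it to. Second, $Z(\widehat{G}_1)=1$ does \emph{not} force $Z(G_1/K)=1$ cofinally — take $G_1=F_2$: its profinite completion is centerless, yet $F_2$ has abelian finite quotients in every cofinal system. The correct consequence of centerlessness is pointwise: for each $x\neq 1$ in $G_1$ there exist $y\in G_1$ and a finite-index $U\trianglelefteq G_1$ with $[x,y]\notin U$; you do not get to kill the whole center of a finite quotient. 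Your appeal to ``unique lifting'' of $G_2/G_1\hookrightarrow\Out(\widehat{G}_1)$ through $\Aut(\widehat{G}_1)\to\Out(\widehat{G}_1)$ is also not the relevant mechanism — that map has kernel $\Inn(\widehat{G}_1)\cong\widehat{G}_1$, so lifts of elements are never unique, and the injectivity $G_2/G_1\hookrightarrow\Out(\widehat{G}_1)$ is itself unjustified. You end by conceding the crucial step is ``presumably where the cited reference does its work,'' which is an acknowledgment that the proof is not there.

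Here is how the paper actually closes the gap, and it is much shorter. For injectivity of $\widehat\alpha$: given $x\in G_1$, $x\neq 1$, use centerlessness to find $y\in G_1$ and a finite-index normal $U\trianglelefteq G_1$ with $[x,y]\notin U$. Since $G_1$ is finitely generated it has only finitely many subgroups of index $[G_1:U]$; replace $U$ by their intersection so that $U$ is characteristic in $G_1$. Then $G_2$ acts by conjugation on the finite group $G_1/U$, the kernel $V$ of this action is a finite-index normal subgroup of $G_2$, and $[x,y]\notin U$ says precisely that $x$ moves the coset $yU$, so $x\notin V$. This separates $x$ from $1$ in a finite quotient of $G_2$, proving $\widehat\alpha$ injective with no lifting of outer actions or compatible-family machinery. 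Surjectivity of $\widehat\beta$ is the formal compact-dense-image observation you already noted. Exactness in the middle is also formal: $\beta$ induces an isomorphism $G_2/\mathrm{Im}\,\alpha\simeq G_3$, whose inverse gives a map $G_3\to\widehat{G}_2/\overline{\mathrm{Im}\,\widehat\alpha}$ satisfying the universal property of the profinite completion of $G_3$, forcing $\mathrm{Im}\,\widehat\alpha=\mathrm{Ker}\,\widehat\beta$. So your instinct about where centerlessness enters is right, but the finite-level translation you need is the single-element separation statement, not a cofinal triviality of centers, and the conjugation-action-on-$G_1/U$ trick makes the whole thing a one-paragraph argument rather than an assembly of compatible quotient systems.
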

\begin{proof}
We  identify $G_1$ with a normal subgroup of
$G_2$ and consider the conjugation action of the latter on the
former. This action extends to an action of $G_2$ on $\widehat{G}_1$
giving rise to a homomorphism $G_2 \to \mathrm{Aut}\:
\widehat{G}_1.$ On the other hand, since $G_1$ is finitely
generated, it is easy to see that the group $\mathrm{Aut}\:
\widehat{G}_1$ is profinite (cf. the beginning of \S3), so the
homomorphism $G_2 \to \mathrm{Aut}\: \widehat{G}_1$ extends to a
continuous homomorphism $\phi \colon \widehat{G}_2 \to
\mathrm{Aut}\: \widehat{G}_1$ such that $\phi(\hat{\alpha}(x)) =
\mathrm{Int}\: x$ for all $x \in \widehat{G}_1.$ Now, given $x \in
\widehat{G}_1,$ $x \neq 1,$ the assumption that $\widehat{G}_1$ has
trivial center implies that $\mathrm{Int}\: x$ is nontrivial, hence
$\hat{\alpha}(x) \neq 1,$ proving that $\hat{\alpha}$ is injective.

Since $\beta(G_2) = G_3,$ we have that $\widehat{\beta}(\widehat
G_2)$ is a compact dense subgroup of $\widehat G_3,$ yielding the
surjectivity of $\widehat{\beta}.$ Finally, $\beta$ defines an
isomorphism of $G_2 / \mathrm{Im}\: \alpha$ onto $G_3,$ and the
inverse of this isomorphism gives rise to a natural map $G_3 \to
\widehat{G_2}/\mathrm{Im}\: \widehat{\alpha}.$ It is easy to see
that the latter satisfies the universal property for the profinite
completion of $G_3,$ which yields $\mathrm{Im}\: \widehat{\alpha} =
\mathrm{Ker}\: \widehat{\beta},$ as required.
\end{proof}

\vskip.5mm

The proof of the Main Theorem relies on the known results about the
centralizers of generators and their commutators in free profinite
groups. As we already mentioned in \S \ref{S:I}, for the purpose of
giving an ``explicit" version of the Main Theorem (cf. Theorem
\ref{T:E1}), we present the proofs of these results based on the
analysis of finite quotients of free groups and their relation
modules rather than on the structure theory of profinite groups
(cf., for example, \cite{HRi}).
\begin{prop}\label{P:Pr10}
Let $F = F(X)$ be the free group on $X = \{x_1, \ldots , x_n\},$ and
let $G = F/N$ be a finite quotient of $F.$ Fix a prime $p$ not
dividing the order of $G,$  set $M = N^p[N , N]$ and let $\gamma
\colon F/M \to F/N$ denote the canonical homomorphism. Then

\vskip2mm

{\rm \ (i)} $\gamma(C_{F/M}(x_iM))$ coincides with the cyclic group
$\la x_iN \ra$ for all $i = 1, \ldots , n;$

\vskip2mm

{\rm (ii)} if $n > 1$ then for any abelian normal subgroup $C
\subset F/M$ we have $\gamma(C) = \{ e \}.$
\end{prop}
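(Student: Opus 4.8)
The plan is to carry out everything inside the finite group $\bar F:=F/M$, which sits in an extension $1\to V\to\bar F\stackrel{\gamma}{\longrightarrow}G\to1$, where $V:=N/M$ is an $\F_p$-vector space that, since $[N,N]\subseteq M$, carries a natural $\F_pG$-module structure via conjugation; as $p\nmid|G|$, the algebra $\F_pG$ is semisimple. For $w\in F$ write $\hat w:=wN\in G$. The one structural fact I would record at the outset is the standard description of the relation module: Fox calculus provides a $G$-equivariant exact sequence $0\to V\stackrel{\mu}{\longrightarrow}(\F_pG)^{n}\to\mathfrak I_{\F_pG}\to0$ (with $\mathfrak I_{\F_pG}$ the augmentation ideal), under which the class of a word $w\in N$ goes to the vector of reduced Fox derivatives; in particular, writing $d$ for the order of $\hat x_i$ in $G$ and $\nu_i:=1+\hat x_i+\cdots+\hat x_i^{d-1}$ for the norm element of $\langle\hat x_i\rangle$, the class $\overline{x_i^{d}}\in V$ of $x_i^{d}$ satisfies $\mu(\overline{x_i^{d}})=\nu_i e_i$. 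Splitting this sequence (possible since $\F_pG$ is semisimple) and using $\F_pG\cong\mathfrak I_{\F_pG}\oplus\F_p$, Krull--Schmidt yields $V\cong(\mathfrak I_{\F_pG})^{n-1}\oplus(\F_p)^{n}$ as $\F_pG$-modules; hence for $n>1$ the regular module $\F_pG$ is a direct summand of $V$, so $G$ acts faithfully on $V$.

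For (i): the inclusion $\langle\hat x_i\rangle\subseteq\gamma(C_{\bar F}(x_iM))$ is immediate, since $x_iM$ centralizes itself; and $\gamma(C_{\bar F}(x_iM))\subseteq C_G(\hat x_i)$ automatically, so it suffices to prove that an element $\hat g\in C_G(\hat x_i)$ lying in $\gamma(C_{\bar F}(x_iM))$ must lie in $\langle\hat x_i\rangle$. Fix a lift $g\in F$; then $[g,x_i]\in N$, its class $c(g):=\overline{[g,x_i]}\in V$ is defined, and since $\overline{[n,x_i]}=(1-\hat x_i)\bar n$ for $n\in N$, replacing $g$ by $gn$ changes $c(g)$ by $\hat g(1-\hat x_i)\bar n=(1-\hat x_i)(\hat g\bar n)$, which ranges over all of $(1-\hat x_i)V$. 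So $\hat g\in\gamma(C_{\bar F}(x_iM))$ if and only if $c(g)\in(1-\hat x_i)V$. As $p\nmid d$, semisimplicity of $\F_p\langle\hat x_i\rangle$ identifies $(1-\hat x_i)V$ with the kernel of multiplication by $\nu_i$ on $V$, so the condition reads $\nu_i\cdot c(g)=0$. Now the telescoping identity $\prod_{k=0}^{d-1}x_i^{k}[g,x_i]x_i^{-k}=[g,x_i^{d}]$ (valid in any group) gives $\nu_i\cdot c(g)=\overline{[g,x_i^{d}]}=(\hat g-1)\overline{x_i^{d}}$; by the description $\mu(\overline{x_i^{d}})=\nu_i e_i$ and the fact that $G$ is an $\F_p$-basis of $\F_pG$, one has $(\hat g-1)\nu_i e_i=0$ iff $\hat g\nu_i=\nu_i$ iff $\hat g\langle\hat x_i\rangle=\langle\hat x_i\rangle$ iff $\hat g\in\langle\hat x_i\rangle$. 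This proves (i).

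For (ii): let $C\trianglelefteq\bar F$ be abelian, put $\bar C:=\gamma(C)\trianglelefteq G$ and $C_0:=C\cap V$. Since $V$ is abelian, conjugation by $\bar F$ on $V$ factors through $\gamma$, so for $c\in C$ and $v\in V$ one has $[c,v]=(\gamma(c)-1)v$ inside $V$. Taking $v\in C_0$ and using that $C$ is abelian gives $(\bar c-1)C_0=0$, i.e. $C_0\subseteq V^{\bar C}$; taking $v\in V$ arbitrary and using that $C$ and $V$ are both normal in $\bar F$ gives $[c,v]\in C\cap V=C_0$, i.e. $(\bar c-1)V\subseteq C_0$. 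Combining these, $(\bar c'-1)(\bar c-1)V=0$ for all $\bar c,\bar c'\in\bar C$, hence $(\mathfrak I_{\F_p\bar C})^{2}V=0$. Since $p\nmid|\bar C|$, the averaging idempotent of $\F_p\bar C$ gives $V=V^{\bar C}\oplus\mathfrak I_{\F_p\bar C}V$, and $(\mathfrak I_{\F_p\bar C})^{2}V=0$ then forces $\mathfrak I_{\F_p\bar C}V\subseteq V^{\bar C}\cap\mathfrak I_{\F_p\bar C}V=0$; thus $\bar C$ acts trivially on $V$. As $n>1$, $G$ acts faithfully on $V$ by the structural fact above, so $\bar C=\{e\}$, which is precisely (ii).

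I expect the principal difficulty to lie in part (i): identifying $\gamma(C_{\bar F}(x_iM))$ exactly cannot be done by the naive order count $|C_{\bar F}(x_iM)|=|\gamma(C_{\bar F}(x_iM))|\cdot|C_{\bar F}(x_iM)\cap V|$ (which is circular), and one is forced into the relation module through the chain $c(g)\in(1-\hat x_i)V\Leftrightarrow\nu_i c(g)=0\Leftrightarrow(\hat g-1)\overline{x_i^{d}}=0$ together with the explicit form of $\overline{x_i^{d}}$. In part (ii) the only substantial input is again the relation module, namely the faithfulness of the $G$-action on $V=N/N^p[N,N]$ for $n>1$; this is exactly where the hypothesis $n>1$ enters, and it genuinely fails for $n=1$ (there $G$ acts trivially on $V$).
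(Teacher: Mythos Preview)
Your proof is correct. Both your argument and the paper's rest on the same structural input: the Fox-derivative exact sequence $0\to V\to(\F_pG)^n\to\mathfrak I_{\F_pG}\to 0$, the resulting decomposition $V\cong\F_p[G]^{n-1}\oplus\F_p$, and the explicit identification $\mu(\overline{x_i^d})=\nu_i e_i$. The conclusion of~(i) in both cases comes down to the implication $\hat g\,\nu_i=\nu_i\Rightarrow\hat g\in\langle\hat x_i\rangle$ in $\F_pG$, and~(ii) in both cases comes down to the faithfulness of the $G$-action on $V$ for $n>1$.

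The route to these endpoints, however, is genuinely different. The paper invokes the Schur--Zassenhaus splitting $\bar F\cong V\rtimes G$: for~(i) it replaces a centralizing element by a $p'$-power, conjugates it into the complement $G$, and then reads off directly that this element fixes $\overline{x_i^d}$ in the module; for~(ii) it replaces $C$ by a $p'$-subgroup, conjugates it into $G$, and argues that a nontrivial such subgroup would meet $V$. You bypass Schur--Zassenhaus entirely. For~(i) you run a cocycle argument, turning the lifting problem for $\hat g$ into the vanishing of $\nu_i c(g)$ via the telescoping identity $\overline{[g,x_i^d]}=\nu_i\cdot\overline{[g,x_i]}$; for~(ii) you show by a two-line commutator computation that $(\mathfrak I_{\F_p\bar C})^2V=0$, whence $\bar C$ acts trivially on $V$ by semisimplicity of $\F_p\bar C$. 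Your approach is more self-contained (no appeal to an external splitting theorem) and makes the cohomological content of~(i) transparent; the paper's approach is shorter once the splitting is granted and perhaps more intuitive, since it literally places the relevant elements inside $G$ before computing.
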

\begin{proof}
The proof uses some well-known properties of the relation module
$\mathfrak{n} := N/[N , N]$ (cf. \cite{Gr}). Namely, there is an
exact sequence of $\Z[G]$-modules
\begin{equation}\label{E:Pr10}
0 \to \mathfrak{n} \stackrel{\sigma}{\longrightarrow} \Z[G]^n
\stackrel{\tau}{\longrightarrow} \mathfrak{g} \to 0
\end{equation}
where $\mathfrak{g}$ is the augmentation ideal in $\Z[G].$ We recall
the construction of $\sigma$ and $\tau.$ It is known that the
augmentation ideal $\mathfrak{f} \subset \Z[F]$ is a free left
$\Z[F]$-module with basis $x_1 - 1, \ldots , x_n - 1.$ So, for any
$f \in F,$ there is a unique presentation of the form
$$
f - 1 = a_1(f)(x_1 - 1) + \cdots + a_n(f)(x_n - 1) \ \ \text{with} \
\ a_i(f) \in \Z[F],
$$
and then $\sigma$ is defined by sending $f \in N$ to
$(\overline{a_1(f)}, \ldots , \overline{a_n(f)}) \in \Z[G]^n,$
where the bar denotes the image under the natural homomorphism
$\Z[F] \to \Z[G].$ Furthermore, $\tau$ is defined by sending $(a_1,
\ldots , a_n) \in \Z[G]^n$ to $\sum a_i(\overline{x}_i - 1).$ Since
all terms in (\ref{E:Pr10}) are free $\Z$-modules, by tensoring with
$\F_p = \Z/p\Z,$ we obtain the following exact sequence of
$\F_p[G]$-modules
\begin{equation}\label{E:Pr11}
0 \to \mathfrak{n}_p \stackrel{\sigma_p}{\longrightarrow} \F_p[G]^n
\stackrel{\tau_p}{\longrightarrow} \mathfrak{g}_p \to 0
\end{equation}
where $\mathfrak{n}_p = \mathfrak{n} \otimes_{\Z} \F_p$ and
$\mathfrak{g}_p = \mathfrak{g} \otimes_{\Z} \F_p.$ Clearly,
$\mathfrak{n}_p = N/M,$ and $\mathfrak{g}_p$ is the augmentation
ideal in $\F_p[G].$ Since $p \nmid \vert G \vert,$ exact sequence
(\ref{E:Pr11}) splits, yielding an isomorphism of $\F_p[G]$-modules
\begin{equation}\label{E:Pr15}
N/M \simeq \F_p[G]^{n-1} \oplus \F_p,
\end{equation}
where $\F_p$ is considered as the trivial $\F_p[G]$-module. Besides,
since the order of $G$ is relatively prime to that of $N/M,$ there
exists a semi-direct product decomposition
$F/M \simeq N/M \rtimes G,$
which we fix (it is not canonical). This enables us to view $G$ as a
subgroup of $F/M.$

\vskip1mm

We will now prove assertion (i). To keep our notations simple, we
will write the argument for $i = 1.$ Let $g \in C_{F/M}(x_1M).$
Since $\la g \ra$ and $\la g^{p^{\ell}} \ra,$ for any $\ell
\geqslant 1,$ have the same image under $\gamma,$ it is enough to
prove our claim assuming that the order of $g$ is prime to $p.$ Then
there exists $h \in N/M$ such that $g' := hgh^{-1}$ belongs to $G.$
Let $d$ be the order of $\overline{x}_1$ in $G.$ Then $y:=x_1^dM \in
N/M,$ and, since $g$ commutes with $y$ and $N/M$ is commutative, $g'$ commutes with $y.$ In
other words, if $N/M$ is viewed as $\F_p[G]$-module then $y \in
N/M,$ and hence $\sigma_p(y),$ is fixed by $g'.$
Using the description of $\sigma$ given above, we obtain
$$
\sigma_p(y) = \left(\sum_{j = 0}^{d-1} \overline{x}_1^j, \ 0,\
\ldots ,\  0 \right) \ \in \F_p[G]^n.
$$
So, $g' \sum_{j = 0}^{d-1} \overline{x}_1^j = \sum_{j = 0}^{d-1}
\overline{x}_1^j$ in $\F_p[G],$ which implies that $\gamma(g) = g'
\in \la x_1N \ra,$ as required.

\vskip1mm

To prove (ii), we observe that for any $\ell \geqslant 1,$ the
subgroup $C^{p^{\ell}}$ is also normal in $F_n/M$ and
$\gamma(C^{p^{\ell}}) = \gamma(C).$ Since $C$ is abelian, for a
sufficiently large $\ell,$ the subgroup $C^{p^{\ell}}$ has order
prime to $p,$ and we can assume that $C$ has this property. Then there
exists $h \in N/M$ such that $hCh^{-1} \subset G,$
and since $C$ is normal, we actually have $C \subset G.$ If $C \neq
\{ e \}$ then as $n > 1,$ we conclude from (\ref{E:Pr15}) that there
exist $c \in C,$ $g \in N/M$ such that $cgc^{-1} \neq g.$ Then
$$
1 \neq cgc^{-1}g^{-1} \in C \cap N/M,
$$
a contradiction.
\end{proof}

\begin{cor}\label{C:Pr1}
Let $\mathfrak{F} = \mathfrak{F}(X)$ be the free profinite group on
a finite set $X.$ Then for any $x \in X,$ the centralizer
$C_{\mathfrak{F}}(x)$ coincides with the pro-cyclic group
$\widehat{\la x \ra}.$ Consequently, if $\vert X \vert > 1$ then
$\mathfrak{F}$ has trivial center.
\end{cor}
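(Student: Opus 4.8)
The plan is to deduce Corollary \ref{C:Pr1} from Proposition \ref{P:Pr10} by a limiting argument over finite quotients of the discrete free group $F = F(X)$. First I would fix $x \in X$ and an element $g \in C_{\mathfrak{F}}(x)$; since $\widehat{\la x \ra}$ is clearly contained in the centralizer, the task is the reverse inclusion. The key point is that $\mathfrak{F} = \widehat{F}$ is the inverse limit of the finite quotients $F/M$, where $M$ ranges over finite-index normal subgroups of $F$, and it suffices to control the image of $g$ in each such $F/M$. More precisely, I would show that for every finite-index normal $M \subset F$, the image of $g$ in $F/M$ lies in the image of $\widehat{\la x \ra}$; passing to the inverse limit then gives $g \in \widehat{\la x \ra}$ because $\widehat{\la x \ra}$ is closed (being compact).

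The second step is to arrange the finite quotients so that Proposition \ref{P:Pr10} applies. Given an arbitrary finite-index normal subgroup $M_0 \subset F$, I want to find a finite quotient $\gamma \colon F/M \to F/N$ of the type considered in the Proposition (so $N$ is finite-index normal, $p$ is a prime not dividing $|F/N|$, and $M = N^p[N,N]$) through which the projection $F \to F/M_0$ factors, or at least so that knowing $\gamma(\bar g) \in \la xN \ra$ pins down the image of $g$ in $F/M_0$ well enough. The natural choice: enlarge $M_0$ if necessary so that $M_0 \supseteq N^p[N,N]$ for a suitable $N$ and $p$ — concretely, first pick $N$ finite-index normal with $N \subseteq M_0$, then pick a prime $p \nmid |F/N|$, and set $M = N^p[N,N] \subseteq N \subseteq M_0$. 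Then the projection $F \to F/M_0$ factors through $F/M$, and inside $F/M$ the element (image of) $g$ centralizes $xM$, so by part (i) of the Proposition its image $\gamma(g) \in F/N$ lies in $\la xN \ra$.

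Here is the subtlety I expect to be the main obstacle: part (i) of the Proposition only tells us about the image of the centralizer under $\gamma \colon F/M \to F/N$, i.e. it controls $g$ modulo $N$, not modulo the finer subgroup $M_0$. To fix this one should observe that the statement of (i) is really about centralizers in $F/M$, and that $g$'s image in $F/M$ itself lies in the preimage under $\gamma$ of $\la xN \ra$ intersected with $C_{F/M}(xM)$; one then needs to check this preimage is exactly $\la xM \ra$, not just that it maps into $\la xN\ra$. In fact the cleaner route is to note that $\gamma(C_{F/M}(xM)) = \la xN\ra$ combined with the fact that $x$ has a well-defined image in the inverse system forces: for each $M_0$, writing $\bar g$ for the image of $g$ in $F/M_0$, there is an integer $k$ (depending on $M_0$) with $\bar g \equiv x^k \pmod{M_0}$ after adjusting by the $p$-part — and a standard compactness/diagonal argument over a cofinal system of such $M$'s yields a single pro-cyclic element. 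I would therefore phrase the argument as: the closure $\widehat{\la x\ra}$ equals $\varprojlim \la xM\ra$, and for every $M$ of the form $N^p[N,N]$ we have $C_{F/M}(xM) \cdot \ker(\gamma) \cap C_{F/M}(xM)$ sandwiched so that $\varprojlim C_{\mathfrak F}(x)$ maps to $\varprojlim \la xN\ra = \widehat{\la x\ra}$, giving $C_{\mathfrak F}(x) \subseteq \widehat{\la x \ra}$; the reverse inclusion is trivial.

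Finally, the "consequently" clause is immediate: if $|X| > 1$, any central element $z$ of $\mathfrak{F}$ lies in $C_{\mathfrak F}(x) = \widehat{\la x\ra}$ for each $x \in X$; picking two distinct generators $x \neq x'$, we get $z \in \widehat{\la x\ra} \cap \widehat{\la x'\ra}$. One then shows this intersection is trivial — e.g. by mapping onto a finite quotient $F/M$ with $M = N^p[N,N]$ as above in which $\la xM\ra \cap \la x'M\ra = \{e\}$ (which holds because $\mathfrak{n}_p = N/M$ contains a free summand $\F_p[G]^{n-1}$ separating the images of distinct generators), or more directly by invoking part (ii) of the Proposition applied to the abelian normal subgroup generated by $z$. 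Alternatively, and most cleanly, since $\la z\ra$ is a central, hence abelian normal, pro-cyclic subgroup, its image in each $F/M$ of the special form is an abelian normal subgroup, so $\gamma(\overline{\la z\ra}) = \{e\}$ by (ii), and passing to the limit gives $z = 1$. This finishes the proof.
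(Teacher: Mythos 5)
Your core approach is correct and is essentially the paper's: for each open normal $U\subset\mathfrak F$ you pass to $N = F\cap U$, form $M = N^p[N,N]$ for a prime $p\nmid|F/N|$, and apply Proposition~\ref{P:Pr10}(i) to the map $F/M\to F/N$ to control the image of $C_{\mathfrak F}(x)$ in $\mathfrak F/U\simeq F/N$; passing to the limit over $U$ then forces $C_{\mathfrak F}(x)\subseteq\widehat{\la x\ra}$. However, the ``subtlety'' you flag in the middle paragraph is a phantom, and I believe it stems from a momentary reversal of an inclusion. You chose $N\subseteq M_0$, so $F/N$ is the \emph{finer} quotient and $F/N\twoheadrightarrow F/M_0$; once Proposition~\ref{P:Pr10}(i) places the image of $g$ in $F/N$ inside $\la xN\ra$, its image in $F/M_0$ automatically lies in $\la xM_0\ra$ because $\la xN\ra$ maps onto $\la xM_0\ra$. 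There is nothing to ``pin down'' further, and the subsequent appeal to a compactness/diagonal argument and ``adjusting by the $p$-part'' is not needed. Indeed you may as well take $N = M_0$ (or $N = F\cap U$) outright. The paper streamlines this by arguing by contradiction: if the image of $C_{\mathfrak F}(x)$ in some $\mathfrak F/U$ strictly contained $\la xU\ra$, then with $N=F\cap U$, $M=N^p[N,N]$, and $V=\widehat M$, the containment $C_{\mathfrak F}(x)\bmod V\subseteq C_{F/M}(xM)$ combined with Proposition~\ref{P:Pr10}(i) forces the image in $\mathfrak F/U$ to lie in $\la xU\ra$ after all, a contradiction. Your handling of the ``consequently'' clause is fine; the version via $\widehat{\la x\ra}\cap\widehat{\la y\ra}=\{e\}$ is what the paper uses, and your alternative via Proposition~\ref{P:Pr10}(ii) is really the argument for Corollary~\ref{C:Pr2}.
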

\begin{proof}
Let $F = F(X)$ be the free discrete group viewed as a (dense)
subgroup of $\mathfrak{F}.$ If our assertion is false then there
exists an open normal subgroup $U \subset \mathfrak{F}$ such that
the image of $C_{\mathfrak{F}}(x)$ in $G := \mathfrak{F}/U$ strictly
contains $\la xU \ra.$ Let $N = F \cap U.$ Pick a prime $p$ not
dividing $\vert G \vert,$ and let $V$ denote the closure of $M =
N^p[N , N]$ in $\mathfrak{F}$ (so that $F \cap V = M$). Then the
image of $C_{\mathfrak{F}}(x)$ in $\mathfrak{F}/V \simeq F/M$ is
contained in $C_{F/M}(xM).$ On the other hand, by Proposition
\ref{P:Pr10}(i), the image of $C_{F/M}(xM)$ in $F/N$ coincides with
$\la xN \ra.$ Using the natural isomorphism $F/N \simeq
\mathfrak{F}/U,$ we see that the image of $C_{\mathfrak{F}}(x)$ in
$\mathfrak{F}/U$ is contained in $\la xU \ra.$ A contradiction,
proving our first assertion. Since for $x , y \in X,$ $x \neq y,$ we
have $\widehat{\la x \ra} \cap \widehat{\la y \ra} = \{ e \},$ our
second assertion follows.
\end{proof}

\vskip1mm

\begin{cor}\label{C:Pr2}
{\rm (cf. \cite{As}, Lemma 10)} Let $\mathfrak{F} = \mathfrak{F}(X)$
be the free profinite group on a finite set $X$ with $\vert X \vert
> 1.$ If $C \subset \mathfrak{F}$ is an abelian normal subgroup then
$C = \{ e \}.$
\end{cor}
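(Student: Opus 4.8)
The plan is to run exactly the argument used for Corollary \ref{C:Pr1}, but invoking part (ii) of Proposition \ref{P:Pr10} in place of part (i). Suppose, for contradiction, that $C \neq \{e\}$, and pick $c \in C$ with $c \neq e$. Since $\mathfrak{F}$ is profinite, there is an open normal subgroup $U \subset \mathfrak{F}$ with $c \notin U$; set $G = \mathfrak{F}/U$, a finite group.

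Next I would set up the auxiliary finite quotient precisely as in Corollary \ref{C:Pr1}. Let $F = F(X)$ be the discrete free group, viewed as a dense subgroup of $\mathfrak{F}$, and let $N = F \cap U$, a finite-index normal subgroup of $F$ and hence itself free of finite rank. Choose a prime $p \nmid \vert G \vert$, form $M = N^p[N,N]$ (a finite-index subgroup of $F$, since $N/M \cong (N/[N,N]) \otimes_{\Z} \F_p$ is finite), and let $V$ be the closure of $M$ in $\mathfrak{F}$, so that $F \cap V = M$ and the inclusion $F \hookrightarrow \mathfrak{F}$ induces an isomorphism $\mathfrak{F}/V \cong F/M$ compatible with the projections onto $\mathfrak{F}/U \cong F/N$. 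Writing $\gamma \colon F/M \to F/N$ for this projection, I would then consider the image $\bar C$ of $C$ under $\mathfrak{F} \to \mathfrak{F}/V \cong F/M$.

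The key point is that $\bar C$ is an abelian normal subgroup of $F/M$: it is normal as the image of a normal subgroup under a surjection, and abelian as the image of an abelian subgroup. Proposition \ref{P:Pr10}(ii) (applicable since $\vert X \vert > 1$) then gives $\gamma(\bar C) = \{e\}$. Translating through the identification $F/N \cong \mathfrak{F}/U = G$, this says precisely that the image of $C$ in $G$ is trivial, so in particular $c \in U$, contradicting the choice of $U$. Hence $C = \{e\}$.

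There is essentially no hard step here; the only things requiring care are the bookkeeping points already verified in Corollary \ref{C:Pr1} — that $M$ has finite index in $F$ so that $V$ is open in $\mathfrak{F}$, and that $F \cap V = M$ so that $\mathfrak{F}/V \cong F/M$ — together with the trivial observations that the image of a normal (resp. abelian) subgroup under a surjection is again normal (resp. abelian). If one prefers, one may first replace $C$ by its closure $\widehat C$, which is still abelian and normal, but this is not necessary since we only ever examine images of $C$ in finite quotients.
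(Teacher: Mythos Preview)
Your proof is correct and matches the paper's own argument essentially verbatim: assume $C \neq \{e\}$, choose an open normal $U$ not containing (some element of) $C$, pass to $V$ corresponding to $M = N^p[N,N]$ with $p \nmid |G|$, and apply Proposition~\ref{P:Pr10}(ii) to the abelian normal image $CV/V$ in $F/M$ to force $C \subset U$, a contradiction. The only cosmetic difference is that the paper phrases the choice of $U$ as ``$U$ does not contain $C$'' rather than fixing a witness $c \notin U$.
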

\begin{proof}
Again, let $F \subset \mathfrak{F}$ be the abstract free group
generated by $X.$ Assume that $C \neq \{ e \},$ and choose an open
normal subgroup $U \subset \mathfrak{F}$ that does not contain $C.$
As in the proof of Corollary \ref{C:Pr1}, set $N = F \cap U$ and $M
= N^p[N , N]$ where $p$ is a prime not dividing the order of $G :=
\mathfrak{F}/U.$ Let $V$ denote the closure of $M$ in
$\mathfrak{F}.$ Then $CV/V$ is an abelian normal subgroup of
$\mathfrak{F}/V = F/M.$ So, it follows from Proposition
\ref{P:Pr10}(ii) that its image in $F/N$ is trivial. Using the
isomorphism $F/N \simeq \mathfrak{F}/U,$ we conclude that $C \subset
U,$ a contradiction.
\end{proof}

\vskip1mm

\noindent {\bf Remark 2.5.} The proof of Lemma 10 in \cite{As} is
faulty. It is based on the ``fact" that if $\mathfrak{H}$ is a free
profinite group (of finite rank), and $h \in
\mathfrak{H}^{\small\mathrm{ab}} = \mathfrak{H}/[\mathfrak{H} ,
\mathfrak{H}]$ is a nontrivial element then for $\beta \in
\widehat{\Z},$ we have that $\beta h = 0$ in
$\mathfrak{H}^{\small\mathrm{ab}}$ implies that $\beta = 0.$ This is
false in general as $\widehat{\Z}$ has zero divisors. The argument
in \cite{As}, however, can be corrected by passing to the
corresponding free pro-$p$ group $\mathfrak{H}_p$ for some prime $p$
(for this, one needs an analog of Corollary \ref{C:Pr1} for
$\mathfrak{H}_p$).

\addtocounter{thm}{1}

\vskip2mm

We now recall Schreier's method (cf. \cite{MKS}, 2.3, or \cite{Rob},
Ch. VI), which will be used repeatedly in this note : Let $G$ be a
group with a generating set $X,$ let $H$ be a subgroup of $G,$ and
$T$ be a right transversal (i.e., a system of representatives of
right cosets containing the identity) to $H$ in $G.$ Given $g \in
G,$ we let $\overline{g}$ denote the unique element in $T$
satisfying $Hg = H\overline{g}.$ Then the set
$$
Y := \{ tx (\overline{tx})^{-1} \ \vert \ t \in T, \ x \in X \}
\setminus \{ e \},
$$
generates $H.$ Moreover, if $G$ is the free group on $X,$ and $T$
has the Schreier property (i.e., the initial segment of any element
of $T$ is again in $T$), then $Y$ is a free generating set for $H.$

\vskip3mm

\begin{lemma}\label{L:Pr25}
Let $F = F(X)$ be the free group on $X = \{x_1, \ldots , x_n\}$ with
$n > 1,$ and let $G = F/N$ be a finite quotient of $F.$ Pick a prime
$p$ not dividing $6\vert G \vert,$ and set $L = N \cap F^6[F , F]$
and $M = L^p[L , L].$ Let $\delta \colon F/M \to F/L$ be the
canonical homomorphism. Then for $c = [x_i , x_j] =
x_ix_jx_i^{-1}x_j^{-1},$ with $i \neq j,$ we have
$\delta(C_{F/M}(cM)) = \la cL \ra.$
\end{lemma}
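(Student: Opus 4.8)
The plan is to transcribe the proof of Proposition~\ref{P:Pr10}(i), with $N$ replaced by $L$ and $G$ by $\bar G:=F/L$; the one genuinely new ingredient will be a short computation with the maps $a_k$ used in the construction of $\sigma$, together with the observation that the factor $F^6[F,F]$ is exactly what makes that computation decisive. First I would record that $L=N\cap F^6[F,F]$ is normal of finite index in $F$, that $\bar G$ embeds into $G\times(\Z/6\Z)^n$, and hence that $p\nmid\vert\bar G\vert$. Applying the relation-module sequence (\ref{E:Pr10}) to $L\trianglelefteq F$ in place of $N\trianglelefteq F$ and tensoring with $\F_p$ produces, as in (\ref{E:Pr11}), a $\bar G$-equivariant injection $\sigma_p\colon L/M\hookrightarrow\F_p[\bar G]^n$ (with $M=L^p[L,L]$), given by the mod-$p$ reductions of the coordinates $a_k$. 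Since $\vert L/M\vert$ is a power of $p$ prime to $\vert\bar G\vert$, I would then fix a semidirect product decomposition $F/M\cong(L/M)\rtimes\bar G$ and view $\bar G$ as a subgroup of $F/M$.

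Next I would dispose of the easy inclusion $\la cL\ra\subseteq\delta(C_{F/M}(cM))$ --- it holds since $cM$ centralizes itself --- and for the reverse inclusion take $g\in C_{F/M}(cM)$ and run the reduction from Proposition~\ref{P:Pr10}(i). Since $\la g\ra$ and $\la g^{p^\ell}\ra$ have the same image under $\delta$ for every $\ell$ (as $\delta(g)$ has order prime to $p$), I may assume $g$ itself has order prime to $p$; then, using $F/M\cong(L/M)\rtimes\bar G$, there is $h\in L/M$ with $g':=hgh^{-1}\in\bar G$, so that $\delta(g)=g'$. As $L/M$ is abelian, $g'$ still centralizes $cM$, hence also $y:=c^dM$, where $d$ is the order of $\bar c:=cL$ in $\bar G$ (so that $c^d\in L$ and $y\in L/M$). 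By $\bar G$-equivariance of $\sigma_p$ this says $g'\cdot\sigma_p(y)=\sigma_p(y)$ in $\F_p[\bar G]^n$, the $\bar G$-action being componentwise left multiplication.

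The last step is to compute $\sigma_p(y)=\sigma_p(c^d)$ and read off which $g'$ can fix it. From the defining properties of the $a_k$ one has $a_i(c)=1-x_ix_jx_i^{-1}$, $a_j(c)=x_i-c$, $a_k(c)=0$ for $k\ne i,j$, and $a_k(c^d)=(1+c+\cdots+c^{d-1})\,a_k(c)$ for all $k$. Passing to $\F_p[\bar G]$, where $\overline{x_ix_jx_i^{-1}}=\bar c\,\bar x_j$, and using the collapse $\bar c\,S=S$ for $S:=\sum_{m=0}^{d-1}\bar c^m=\sum_{t\in\la\bar c\ra}t$, I expect the $i$-th and $j$-th components of $\sigma_p(c^d)$ to come out as $S(1-\bar x_j)$ and $S(\bar x_i-1)$ respectively, with all other components $0$. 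Then $S(\bar x_i-1)=S\bar x_i-S$ is supported with coefficient $+1$ on $\la\bar c\ra\bar x_i$ and $-1$ on $\la\bar c\ra$, two sets that are disjoint precisely because $\bar x_i\notin\la\bar c\ra$ --- which is where the factor $F^6[F,F]$ is used, the canonical surjection $F/L\twoheadrightarrow F/F^6[F,F]\cong(\Z/6\Z)^n$ killing $cL$ but not $x_iL$. Since left multiplication by $g'$ must preserve the coefficient function of $\sigma_p(c^d)$, it must carry $\la\bar c\ra$ onto itself, forcing $g'\in\la\bar c\ra=\la cL\ra$; hence $\delta(g)=g'\in\la cL\ra$.

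The hard part --- really the only part carrying content --- will be this computation of $\sigma_p(c^d)$, and in particular the simplification $\bar c\,S=S$: it is what turns each nonzero component into a difference of ``indicator'' elements of $\F_p[\bar G]$ whose left-multiplication stabilizer is visibly $\la\bar c\ra$, and it is here that both the hypothesis $p\nmid 6\vert G\vert$ (which ensures $p\nmid\vert\bar G\vert$) and the presence of $F^6[F,F]$ (which ensures $\bar x_i\notin\la\bar c\ra$ in $F/L$) come into play. Everything else is a faithful copy of the argument in Proposition~\ref{P:Pr10}(i).
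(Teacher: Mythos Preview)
Your argument is correct, modulo one small slip in phrasing: the claim ``as $L/M$ is abelian, $g'$ still centralizes $cM$'' does not follow, since $cM$ need not lie in $L/M$ and $h$ need not commute with it. What you actually need, and what does follow exactly as in Proposition~\ref{P:Pr10}(i), is that $g'$ centralizes $y=c^{d}M\in L/M$ (both $g$ and $h$ commute with $y$, the latter because $L/M$ is abelian). With that correction the rest of your computation goes through.

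The paper, however, takes a genuinely different route. Rather than computing $\sigma_p(c^d)$ directly via Fox derivatives, it introduces the two intermediate subgroups $F_1=F^{2}[F,F]$ and $F_2=F^{3}[F,F]$, both containing $L$ with index prime to $p$. A Schreier computation shows that $c$ is part of a \emph{free generating set} for each of $F_1$ and $F_2$, so Proposition~\ref{P:Pr10}(i) applies verbatim (with $F_k$ in place of $F$, $c$ in place of $x_i$, and $L$ in place of $N$) to give $\delta(C_{F_k/M}(cM))=\langle cL\rangle$. Since every $t\in F/M$ satisfies $t^{2}\in F_1/M$ and $t^{3}\in F_2/M$, one obtains $\delta(t)^{2},\,\delta(t)^{3}\in\langle cL\rangle$, whence $\delta(t)\in\langle cL\rangle$. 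This explains structurally why the factor $6$ appears: $F^{6}[F,F]=F_1\cap F_2$ is needed so that $L$ sits inside both auxiliary subgroups. In your approach, by contrast, $F^{6}[F,F]$ is used only to force $\bar x_i\notin\langle\bar c\rangle$ in $F/L$, a condition that in fact already holds with $F^{2}[F,F]$ alone. So your direct computation actually extracts a little more, at the cost of the Fox-derivative bookkeeping; the paper's argument is cleaner and avoids that computation entirely by reducing to the case where $c$ is itself a generator.
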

\begin{proof}
Let $F_1 = F^2[F , F]$ and $F_2 = F^3[F , F].$ We will write the
argument for $i = 2,$ $j = 1.$ The set $$T_1 = \{ x_1^{e_1} \cdots
x_n^{e_n} \ \vert \ e_i \in \{ 0 , 1 \}\}$$ is a transversal to
$F_1$ in $F.$ Applying Schreier's method, we see that $F_1$ is a
free group on a set containing $c.$ Then by Proposition
\ref{P:Pr10}(i), we have $\delta(C_{F_1/M}(cM)) = \la cL \ra.$ Since
$F^2 \subset F_1,$ we obtain that
\begin{equation}\label{E:Pr30}
\text{for any} \ t \in C_{F/M}(cM)\  \text{we have} \ \delta(t)^2
\in \la cL \ra.
\end{equation}
Similarly,
$$
T_2 = \{ x_1^{e_1} \cdots x_n^{e_n} \ \vert \ e_i \in \{ 0, 1, 2
\}\}
$$
is a transversal to $F_2$ in $F.$ Applying again Schreier's method,
we obtain that $F_2$ is a free group on a set containing $c,$ hence
$\delta(C_{F_2/M}(cM)) = \la cL \ra.$ As $F^3 \subset F_2,$ we see
that
\begin{equation}\label{E:Pr31}
\text{for any} \ t \in C_{F/M}(cM)\  \text{we have} \ \delta(t)^3
\in \la cL \ra.
\end{equation}
Now, our assertion follows from (\ref{E:Pr30}) and (\ref{E:Pr31}).
\end{proof}

\begin{cor}\label{C:Pr30}
{\rm (cf. \cite{As}, Lemma 1)} Let $\mathfrak{F} = \mathfrak{F}(X)$
be the free profinite group on a finite set $X$ with $\vert X \vert
> 1.$ Given $x , y \in X,$ $x \neq y,$ for $c = [x, y]$ we have
$C_{\mathfrak{F}}(c) = \widehat{\la c \ra}.$
\end{cor}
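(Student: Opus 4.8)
The plan is to run the same argument that proved Corollary \ref{C:Pr1}, but with Lemma \ref{L:Pr25} in place of Proposition \ref{P:Pr10}(i). The inclusion $\widehat{\la c \ra} \subseteq C_{\mathfrak{F}}(c)$ is immediate, since $\la c \ra$ commutes with $c$ and $C_{\mathfrak{F}}(c)$ is closed. For the reverse inclusion I would argue by contradiction. Let $F = F(X)$ be the abstract free group, viewed as a dense subgroup of $\mathfrak{F}$, and suppose $C_{\mathfrak{F}}(c) \supsetneq \widehat{\la c \ra}$. Since $\widehat{\la c \ra}$ equals the intersection of the open subgroups $\la c \ra U$, with $U$ ranging over the open normal subgroups of $\mathfrak{F}$, there is such a $U$ for which the image of $C_{\mathfrak{F}}(c)$ in $G := \mathfrak{F}/U$ strictly contains $\la cU \ra$ (pick $g \in C_{\mathfrak{F}}(c) \setminus \widehat{\la c \ra}$ and choose $U$ with $g \notin \la c \ra U$).

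Next I would set $N = F \cap U$ (so $G \simeq F/N$ is finite), choose a prime $p$ not dividing $6 \vert G \vert$, and put $L = N \cap F^6[F,F]$ and $M = L^p[L,L]$, which are exactly the hypotheses of Lemma \ref{L:Pr25}. The one bookkeeping point that needs checking is that $L$ and $M$ have \emph{finite index} in $F$: indeed $N$ and $F^6[F,F]$ are of finite index, hence so is $L$; by the Nielsen--Schreier theorem $L$ is free of finite rank, so $L/M = L/L^p[L,L]$ is finite and $M$ has finite index in $L$, hence in $F$. Letting $V$ denote the closure of $M$ in $\mathfrak{F}$, it follows that $F \cap V = M$, that $V$ is open with $V \subseteq U$, and that the natural homomorphisms induce isomorphisms $\mathfrak{F}/V \simeq F/M$ and $\mathfrak{F}/U \simeq F/N$ that are compatible with the projection $\mathfrak{F}/V \to \mathfrak{F}/U$ and the canonical map $F/M \to F/N$, the latter factoring as $F/M \stackrel{\delta}{\to} F/L \to F/N$ with $\delta$ as in Lemma \ref{L:Pr25}.

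It then remains to chase the centralizer through these maps. The image of $C_{\mathfrak{F}}(c)$ in $\mathfrak{F}/V \simeq F/M$ is contained in $C_{F/M}(cM)$, because $c$ maps to $cM$; applying $\delta$ and invoking Lemma \ref{L:Pr25} (which applies verbatim to $c = [x,y]$), the image of $C_{\mathfrak{F}}(c)$ in $F/L$ lies in $\delta(C_{F/M}(cM)) = \la cL \ra$; pushing forward along $F/L \to F/N$ and using $F/N \simeq \mathfrak{F}/U$, we conclude that the image of $C_{\mathfrak{F}}(c)$ in $\mathfrak{F}/U$ is contained in $\la cU \ra$, contradicting the choice of $U$. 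This yields $C_{\mathfrak{F}}(c) \subseteq \widehat{\la c \ra}$ and hence the corollary. I do not expect any real obstacle here: the argument is structurally identical to that of Corollary \ref{C:Pr1}, and the only place demanding a little care is the verification that $L$ and $M$ are of finite index in $F$ (so that $V$ is open and the quotient identifications are valid); once that is in place, replacing the roles of $N$, $M$, $\gamma$ there by $L$, $M$, $\delta$ here makes everything go through.
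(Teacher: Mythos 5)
Your proposal is correct and follows exactly the route the paper indicates: the paper's "proof" of Corollary \ref{C:Pr30} is the one-line remark that it is derived from Lemma \ref{L:Pr25} just as Corollary \ref{C:Pr1} was derived from Proposition \ref{P:Pr10}(i), and your write-up is precisely that derivation, with the correct substitutions $N \mapsto L = N \cap F^6[F,F]$, $M = L^p[L,L]$, $\gamma \mapsto \delta$, plus the routine extra projection $F/L \to F/N$ at the end. The bookkeeping you flag (finite index of $L$ and $M$, normality of $M$ so that its closure $V$ is open with $F \cap V = M$ and $\mathfrak{F}/V \simeq F/M$) is exactly the content one has to supply, and your justification via Nielsen--Schreier is the standard one.
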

This is derived from Lemma \ref{L:Pr25} just as Corollary
\ref{C:Pr1} was derived from Proposition \ref{P:Pr10}(i).

\vskip1mm

\begin{lemma}\label{L:PF4}
{\rm (cf. \cite{As}, Lemma 8)} Let $\mathfrak{F}$ be a free
profinite group of finite rank, and $N \subset \mathfrak{F}$ be an
open subgroup. If $\sigma \in \Aut \mathfrak{F}$ restricts trivially
to $N$ then $\sigma = \mathrm{id}_{\mathfrak{F}}.$
\end{lemma}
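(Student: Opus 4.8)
The plan is to show that $\sigma$ acts trivially on all of $\mathfrak{F}$ by exploiting the fact that a free profinite group of rank $\geq 2$ is centerless (Corollary \ref{C:Pr1}) and that its open subgroups are themselves free profinite of finite rank (the profinite Nielsen--Schreier theorem). First I would dispose of the rank-one case: if $\rk \mathfrak{F} = 1$ then $\mathfrak{F} \cong \widehat{\Z}$, every open subgroup $N$ is of finite index hence again isomorphic to $\widehat{\Z}$, and an automorphism of $\widehat{\Z}$ restricting to the identity on a finite-index subgroup $n\widehat{\Z}$ must be the identity (it is multiplication by a unit $u \in \widehat{\Z}^{\times}$ with $nu = n$, forcing $u = 1$). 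So assume $\rk \mathfrak{F} > 1$ from now on.

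Next, I would reduce to the case where $N$ is normal in $\mathfrak{F}$: replace $N$ by the (open, normal, $\sigma$-invariant since $\sigma$ is continuous and fixes $N$) subgroup $N_0 = \bigcap_{g} gNg^{-1}$, the intersection being over the finitely many conjugates; $\sigma$ restricts trivially to $N_0 \subset N$ as well. Now the key step: for each $g \in \mathfrak{F}$ and each $x \in N_0$ we have $\sigma(gxg^{-1}) = \sigma(g)\,x\,\sigma(g)^{-1}$ on one hand, and $gxg^{-1} \in N_0$ so $\sigma(gxg^{-1}) = gxg^{-1}$ on the other; hence $g^{-1}\sigma(g)$ centralizes every element of $N_0$, i.e. $g^{-1}\sigma(g) \in C_{\mathfrak{F}}(N_0)$. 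I therefore need to know that $C_{\mathfrak{F}}(N_0) = \{e\}$: since $N_0$ is open and normal and $\mathfrak{F}$ has rank $> 1$, $N_0$ is a free profinite group of rank $> 1$ (finite-index open subgroups of nonabelian free profinite groups have rank $> 1$), so by Corollary \ref{C:Pr1} applied to $N_0$ its center is trivial, and moreover any element $z \in \mathfrak{F}$ centralizing $N_0$ together with $N_0$ generates an abelian-by-... configuration — more directly, $\langle z \rangle$ normalizes $N_0$ and centralizes it, so $z \in C_{\mathfrak{F}}(N_0)$ and $C_{\mathfrak{F}}(N_0) \cap N_0 = Z(N_0) = \{e\}$, while $C_{\mathfrak{F}}(N_0)$ is normalized by $N_0$, hence $C_{\mathfrak{F}}(N_0)$ is centralized by $N_0$; then $C_{\mathfrak{F}}(N_0) \cdot N_0 / N_0$ embeds into $\mathfrak{F}/N_0$ and $C_{\mathfrak{F}}(N_0)$ commutes with $N_0$, forcing $C_{\mathfrak{F}}(N_0)$ into the center of $\langle C_{\mathfrak{F}}(N_0), N_0\rangle$; taking the normal closure one checks $C_{\mathfrak{F}}(N_0)$ is an abelian normal subgroup of $\mathfrak{F}$, which is trivial by Corollary \ref{C:Pr2}. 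Thus $g^{-1}\sigma(g) = e$ for all $g$, i.e. $\sigma = \id_{\mathfrak{F}}$.

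The main obstacle is the centralizer computation $C_{\mathfrak{F}}(N_0) = \{e\}$; I expect the cleanest route is: $C := C_{\mathfrak{F}}(N_0)$ is a closed subgroup, it is normalized by $N_0$ (since $N_0$ is normal in $\mathfrak{F}$) and in fact centralized by $N_0$ by definition of $C$, so $C$ commutes elementwise with $N_0$; hence the subgroup $\langle C, N_0 \rangle$ has $C$ in its center, and since $N_0$ is open, $\langle C, N_0 \rangle$ is open in $\mathfrak{F}$, so $C$ lies in the center of an open subgroup of $\mathfrak{F}$ — but an open (hence finite-rank, nonabelian since rank $> 1$) free profinite subgroup is centerless by Corollary \ref{C:Pr1}, giving $C = \{e\}$. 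One should double-check that $\langle C, N_0\rangle$ is again free profinite of rank $>1$ (true as an open subgroup of $\mathfrak{F}$), which is exactly the input needed. An alternative, avoiding the Nielsen--Schreier input for $N_0$ itself, is to run the relation-module argument of Proposition \ref{P:Pr10} directly, but invoking Corollaries \ref{C:Pr1} and \ref{C:Pr2} for the open subgroup is shorter and is the approach I would write up.
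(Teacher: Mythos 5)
Your overall strategy — reduce to an open normal $N_0$, then observe that for every $g \in \mathfrak{F}$ the element $g^{-1}\sigma(g)$ centralizes $N_0$, so it suffices to show $C := C_{\mathfrak{F}}(N_0) = \{e\}$ — is sound, and the rank-one case and the normalization step are both fine. But the centralizer computation contains a genuine error. You assert, in both formulations you offer, that since $C$ commutes elementwise with $N_0$, the group $C$ must lie in the center of $\langle C, N_0\rangle$ (equivalently, that $C$ is abelian normal, so that Corollary~\ref{C:Pr2} applies). This does not follow: commuting with $N_0$ says nothing about elements of $C$ commuting with \emph{each other}. A priori $C$ could be a nonabelian finite group (it injects into the finite group $\mathfrak{F}/N_0$ because $C \cap N_0 = Z(N_0) = \{e\}$), and neither Corollary~\ref{C:Pr1} nor Corollary~\ref{C:Pr2} as stated excludes a nonabelian finite normal subgroup. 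You would need either torsion-freeness of free profinite groups (not established in the paper, and typically proved by profinite cohomology), or a strengthening of Proposition~\ref{P:Pr10}(ii) to arbitrary (not just abelian) normal subgroups of order prime to $p$ — neither of which you invoke.

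The paper avoids this pitfall by not trying to understand $C_{\mathfrak{F}}(N_0)$ directly. Instead, it fixes two basis elements $x_1, x_2$ of $\mathfrak{F}$, notes $x_i^m \in N_0$ for $m = [\mathfrak{F}:N_0]$, and proves the key identity $C_{\mathfrak{F}}(x^m) = \widehat{\langle x \rangle}$ (equation~(\ref{E:Pr50}), via Schreier's method applied to the kernel of $F \to \Z/m\Z$ and Corollary~\ref{C:Pr1} for the resulting open free subgroup). Since $g^{-1}\sigma(g)$ lies in both $C_{\mathfrak{F}}(x_1^m) = \widehat{\langle x_1\rangle}$ and $C_{\mathfrak{F}}(x_2^m) = \widehat{\langle x_2\rangle}$, and these procyclic groups intersect trivially, one concludes $\sigma = \id$. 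You can salvage your write-up by the same device: $C_{\mathfrak{F}}(N_0) \subseteq C_{\mathfrak{F}}(x_1^m) = \widehat{\langle x_1\rangle}$ is procyclic, hence abelian, and then Corollary~\ref{C:Pr2} finishes; but the real content is exactly the computation~(\ref{E:Pr50}), which your proposal omits.
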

\begin{proof}
We can obviously assume that $N$ is normal in $\mathfrak{F},$ and
then  $g^m \in N$ for any $g \in \mathfrak{F},$ where $m =
[\mathfrak{F} : N].$ If $\mathfrak{F}$ is of rank one then $\sigma$
is of the form $\sigma(g) = g^{\alpha}$ for some $\alpha \in
\widehat{\Z}.$
The fact that $\sigma \vert N = \mathrm{id}_N$ implies that
$m(\alpha - 1) = 0.$ Since $m$ is not a zero divisor in
$\widehat{\Z},$ we conclude that $\alpha = 1,$ i.e. $\sigma =
\mathrm{id}_{\mathfrak{F}}.$

Now, assume that $\mathfrak{F} = \mathfrak{F}(X)$ where $\vert X
\vert > 1,$ and pick two distinct elements $x_1 , x_2 \in X.$ Since
$gx_i^mg^{-1} \in N$ for $i = 1, 2$ and all $g \in \mathfrak{F},$ we
have
$$
g x_i^m g^{-1}=\sigma(g x_i^m g^{-1})=\sigma(g) x_i^m
\sigma(g)^{-1},
$$
and therefore
\begin{equation}\label{E:Pr125}
g^{-1}\sigma(g)\in C_{\mathfrak{F}}(x_i^m) \ \ \text{for} \ i = 1,
2.
\end{equation}
We will now show that
\begin{equation}\label{E:Pr50}
\text{for any} \ x \in X \ \text{and any positive } \ m \in \Z, \
\text{we have} \ \ C_{\mathfrak{F}}(x^m) = \widehat{\la x \ra}.
\end{equation}
Indeed, consider the homomorphism $\varepsilon \colon F \to \Z/m\Z$
of the group $F = F(X)$ that takes $x$ to $1(\md m),$ and all other
generators $y \in X \setminus \{ x \}$ to $0(\md m).$ Let $H =
\mathrm{Ker}\: \varepsilon.$ Applying Schreier's method to the
transversal $T = \{x^e \ \vert \ e = 0, \ldots , m-1 \}$ to $H$ in
$F,$ we see that $H$ is a free group on a set containing $x^m.$ By
Corollary \ref{C:Pr1}, for the corresponding free profinite group
$\mathfrak{H}$ (the closure of $H$ in $\mathfrak{F}$) we have
$C_{\mathfrak{H}}(x^m) = \widehat{\la x^m \ra}.$ Since
$[\mathfrak{F} : \mathfrak{H}] = m,$ we have $[C_{\mathfrak{F}}(x^m)
: C_{\mathfrak{H}}(x^m)] \leqslant m.$ On the other hand
$\widehat{\la x \ra} \subset C_{\mathfrak{F}}(x^m)$ and
$[\widehat{\la x \ra} : \widehat{\la x^m \ra}] = m,$ so
(\ref{E:Pr50}) follows.

Using (\ref{E:Pr125}) and (\ref{E:Pr50}), we now see that
$$
g^{-1}\sigma(g) \in \widehat{\la x_1 \ra} \cap \widehat{\la x_2 \ra}
= \{ e \} \ \ \text{for any} \ \ g \in \mathfrak{F},
$$
i.e. $\sigma = \mathrm{id}_{\mathfrak{F}}.$
\end{proof}

\section{Some reductions}\label{S:P}

Let $\mathfrak{F}$ be a finitely generated profinite group. Then
$\mathfrak{F}$ has only finitely many open subgroups of index $\leqslant n,$ 
for each $n \geqslant 1,$ implying that the intersection
$U_n$ of all these subgroups is itself an open normal subgroup. It
is easy to see that the automorphism group $\Aut \mathfrak{F}$ can
be naturally identified with $\displaystyle \lim_{\leftarrow}
\Aut(\mathfrak{F}/U_n),$ making it a profinite group. Furthermore,
the topology on $\Aut \mathfrak{F}$ arising from the above
identification coincides with the natural topology of uniform
convergence (cf. \cite{Sm}). Now, if $\mathfrak{F} = \widehat{G},$
where $G$ is a finitely generated discrete group, then the pullback
of the topology on $\Aut \widehat{G}$ under the natural map $\Aut G
\stackrel{\iota}{\longrightarrow} \Aut \widehat{G}$ coincides with
the topology $\tau_{\mathrm{c}}$ defined in terms of congruence
subgroups $\Gamma[K]$ of $\Gamma = \Aut G$ for all finite index
subgroups $K \subset G$ (cf. \S \ref{S:I}). Consequently, the
completion $\overline{\Gamma}$ can be identified with the closure of
$\mathrm{Im}\: \iota$ in $\Aut \widehat{G}.$
The kernel of the resulting map
$\widehat{\Aut G}\stackrel{\varphi}{\longrightarrow} \Aut \widehat{G}$
coincides with
the intersection $\bigcap_K \widehat{\Gamma[K]}$ taken over all
finite index normal subgroups $K \subset G$ where $\widehat{\ }$
denotes the closure in $\widehat{\Gamma} = \widehat{\Aut
G}.$\footnote{We note that for $G$ a free group of any rank $r
\geqslant 1,$ the homomorphism $\widehat{\Aut G} \to \Aut
\widehat{G}$ is not surjective. This follows from the fact that
$\widehat{G}^{\small\mathrm{ab}} = \widehat{\Z}^{r},$ and the
resulting map $\Aut \widehat{G}
\stackrel{\hat{\theta}}{\longrightarrow} GL_{r}(\widehat{\Z})$ is
surjective while $(\hat{\theta} \circ \varphi)(\widehat{\Aut G})$ is
contained in (actually, is equal to) the subgroup of
$GL_{r}(\widehat{\Z})$ of matrices having determinant $\pm 1.$
Incidentally, it is well-known (and follows, for example, from
Dirichlet's Prime Number Theorem) that $\widehat{\Z}^{\times}$ is
not finitely generated, implying that $\Aut \widehat{G}$ is not
finitely generated (as a profinite group) - this result is given as
Corollary 3 in \cite{Ro} where it is established using some results
of \cite{Me}.} Similarly, the group of outer automorphisms  $\Out
\widehat{G} = \Aut \widehat{G} / \Int \widehat{G}$ is profinite, so
the natural map $\Out G \stackrel{\omega}{\longrightarrow} \Out
\widehat{G}$ extends to a continuous homomorphism $\widehat{\Out G}
\stackrel{\psi}{\longrightarrow} \Out \widehat{G}.$ As above, the
pullback under $\omega$ of the topology on $\Out \widehat{G}$
coincides with  the topology on $\Delta = \Out G$ defined by the
following ``congruence subgroups"
$$
\Delta[K] = \mathrm{Ker}\left(\Out G \to \Out(G/K) \right)
$$
associated to characteristic finite index subgroups $K \subset G.$
Then the completion $\overline{\Delta}$ of $\Delta$ for that
topology can be identified with the closure of $\mathrm{Im}\:
\omega$ in $\Out \widehat{G},$ and $\mathrm{Ker}\: \psi$ coincides
with the intersection $\bigcap_K \widehat{\Delta[K]}$ where the
intersection is taken over all finite index characteristic subgroups
$K \subset G$ and $\widehat{\ }$ denotes the closure in the
profinite completion $\widehat{\Delta}.$

\vskip2mm

We will now relate the injectivity of $\varphi$ to that of $\psi.$
\begin{lemma}\label{L:P1}
Let $G$ be a finitely generated residually finite group such that
$\widehat{G}$ has trivial center. If $\widehat{\Out G}
\stackrel{\psi}{\longrightarrow} \Out \widehat{G}$ is injective then
so is $\widehat{\Aut G} \stackrel{\varphi}{\longrightarrow} \Aut
\widehat{G}.$
\end{lemma}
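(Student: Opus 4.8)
The plan is to exploit the exact sequence relating $\Aut G$, $\Out G$, and $\Int G \cong G$ (since $G$ is residually finite with $\widehat G$ centerless, $G$ itself is centerless so $G \cong \Int G$), and to pass to profinite completions using Lemma~\ref{L:PF1}. Concretely, start from the exact sequence
$$
1 \to G \stackrel{\mathrm{ad}}{\longrightarrow} \Aut G \longrightarrow \Out G \to 1.
$$
Here $G$ is finitely generated and $\widehat G$ has trivial center by hypothesis, so Lemma~\ref{L:PF1} applies and yields an exact sequence of profinite completions
$$
1 \to \widehat G \longrightarrow \widehat{\Aut G} \longrightarrow \widehat{\Out G} \to 1.
$$

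\textbf{Next I would} set up the analogous sequence on the profinite side. There is a natural map $\widehat G \to \Aut\widehat G$ (the adjoint action of $\widehat G$ on itself), which is injective precisely because $\widehat G$ is centerless, so $\Int\widehat G \cong \widehat G$, and we get
$$
1 \to \widehat G \longrightarrow \Aut\widehat G \longrightarrow \Out\widehat G \to 1.
$$
The two sequences fit into a commutative ladder whose three vertical arrows are: the identity $\widehat G \to \widehat G$ (one checks the square commutes because $\varphi$ is compatible with the adjoint actions — this is where one uses that $\varphi$ is induced by $\Aut G \to \Aut\widehat G$ and that $\widehat G$ receives $G$ densely), the map $\varphi\colon\widehat{\Aut G}\to\Aut\widehat G$, and the map $\psi\colon\widehat{\Out G}\to\Out\widehat G$.

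\textbf{Then the conclusion is a diagram chase}: given $\sigma \in \ker\varphi$, its image in $\widehat{\Out G}$ lies in $\ker\psi = \{1\}$ by hypothesis, so $\sigma$ comes from an element $g$ of $\widehat G$ (exactness of the top row); but then $g$ maps under the left vertical (the identity) to the image of $\sigma$ in the bottom row, which is trivial since $\varphi(\sigma)=1$ and the bottom row is exact — hence $g = 1$ in $\widehat G$, so $\sigma = 1$. Thus $\varphi$ is injective. Finally, injectivity of $\varphi$ is exactly the statement that $\Aut G$ has the congruence subgroup property in the sense of (*), by the reformulation recalled in \S\ref{S:P} (the kernel of $\varphi$ is $\bigcap_K \widehat{\Gamma[K]}$).

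\textbf{The main obstacle} I expect is not the diagram chase but verifying the commutativity of the \emph{left} square and, more delicately, checking that the bottom row is genuinely exact — i.e.\ that $\Int\widehat G$ is closed in $\Aut\widehat G$ and is the full kernel of $\Aut\widehat G\to\Out\widehat G$. Closedness follows because $\widehat G$ is compact and the adjoint map $\widehat G\to\Aut\widehat G$ is a continuous injection of a compact group, hence a homeomorphism onto its (closed) image; that $\Out\widehat G$ is Hausdorff (indeed profinite, as noted in \S\ref{S:P}) makes $\Int\widehat G$ the kernel of a continuous map to a Hausdorff group, hence closed. One also needs that the natural map $\widehat G \to \widehat{\Int G}$ is an isomorphism, which is immediate since $\Int G \cong G$ as $G$ is centerless. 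With these points in place the argument is short.
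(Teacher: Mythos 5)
Your proposal is correct and follows essentially the same route as the paper: form the exact sequence $1 \to G \to \Aut G \to \Out G \to 1$, apply Lemma~\ref{L:PF1} (using that $\widehat G$ is centerless) to get an exact sequence of profinite completions, map it to the corresponding sequence for $\widehat G$, and run the diagram chase. Your extra care about the commutativity of the left square and the closedness of $\Int\widehat G$ is a welcome sanity check but not a departure from the paper's argument.
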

\begin{proof}
Since $G$ is residually finite, the center of $G$ is also trivial,
so identifying $\Int G$ and $\Int \widehat{G}$ with $G$ and
$\widehat{G}$ respectively, we get the following commutative diagram
with exact rows:
$$
\begin{array}{cccccccccc}
1 & \to & G & \longrightarrow & \Aut G & \longrightarrow & \Out G &
\to & 1 \\
 & &\downarrow & &\downarrow & &\downarrow & & & \\
1 & \to & \widehat{G} & \longrightarrow & \Aut \widehat{G} &
\longrightarrow & \Out \widehat{G} & \to & 1
\end{array}
$$
Since the center of $\widehat{G}$ is trivial, taking the profinite
completion of the groups in the top row yields, by Lemma
\ref{L:PF1}, the following commutative diagram with exact rows:
$$
\begin{array}{cccccccccc}
1 & \to &\widehat{G} & \longrightarrow & \widehat{\Aut G} &
\longrightarrow & \widehat{\Out G} &
\to & 1 \\
 & &\parallel \downarrow & &\varphi \downarrow & &\psi \downarrow & & & \\
1 & \to & \widehat{G} & \longrightarrow & \Aut \widehat{G} &
\longrightarrow & \Out \widehat{G} & \to & 1
\end{array}
$$
Then a simple diagram chase shows that if $\psi$ is injective then
$\varphi$ is also injective.
\end{proof}

\vskip5mm

Let $F$ be the free group with generators $x$ and $y.$ It is
well-known (cf. \cite{MKS}, 3.5) that the canonical homomorphism $F
\longrightarrow F^{\small\mathrm{ab}} = F/[F , F]$ combined with the
identification $F^{\small\mathrm{ab}} \simeq \Z^2$ yields the
following exact sequence:
$$
1 \to \In F \longrightarrow \Aut F
\stackrel{\theta}{\longrightarrow} \Aut F^{\small\mathrm{ab}} =
GL_2(\Z) \to 1,
$$
i.e., $\Out F$ can be naturally identified with $GL_2(\Z).$ Let
$\Phi$ be the free group with generators $a$ and $b.$ Consider the
automorphisms $\alpha , \beta \in \Aut F$ defined by
$$
\alpha \colon \left\{\begin{array}{ccc} x & \to & x \\ y & \to &
yx^2
\end{array} \right. \ \ \ \text{and} \ \ \ \beta \colon \left\{\begin{array}{ccc} x & \to & xy^2 \\ y & \to &
y
\end{array} \right.,
$$
and let $\Phi \to \Aut F$ be the homomorphism defined by $a \mapsto
\alpha$ and $b \mapsto \beta.$ Since the group $\Aut \widehat{F}$ is
profinite, this homomorphism extends to a continuous homomorphism
$\nu \colon \widehat{\Phi} \to \Aut \widehat{F}.$
\begin{prop}\label{P:P1}
If $\nu$ is injective then $\varphi: \widehat{\Aut F}\to \Aut \widehat{F}$ is also injective.
\end{prop}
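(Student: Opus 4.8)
The plan is to reduce, via Lemma~\ref{L:P1}, the injectivity of $\varphi$ to that of the outer-automorphism map $\psi\colon\widehat{\Out F}\to\Out\widehat F$, then to recognize $\langle\alpha,\beta\rangle$ as (essentially) a congruence subgroup of $\Out F=GL_2(\Z)$, and finally to exploit that $\alpha$ and $\beta$ fix the commutator $[x,y]$. Since $F=F_2$ is residually finite and $\widehat F$ has trivial center by Corollary~\ref{C:Pr1}, Lemma~\ref{L:P1} applies, so it will be enough to show $\psi$ is injective; and I recall from Section~\ref{S:P} that $\mathrm{Ker}\:\psi=\bigcap_K\widehat{\Delta[K]}$, the intersection over finite index characteristic subgroups $K\subset F$, closures taken in $\widehat{\Out F}$.

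First I would locate $\langle\alpha,\beta\rangle$ inside $\Out F$. Under the identification $\Out F=GL_2(\Z)$ of Section~\ref{S:P}, the classes of $\alpha$ and $\beta$ are the matrices $\left(\begin{smallmatrix}1&2\\0&1\end{smallmatrix}\right)$ and $\left(\begin{smallmatrix}1&0\\2&1\end{smallmatrix}\right)$; by Sanov's theorem these generate a free subgroup $\Lambda$ of rank $2$, and an elementary computation modulo $4$ shows $\Gamma(4)\subseteq\Lambda\subseteq\Gamma(2)$. Thus $\Lambda$ has finite index in $GL_2(\Z)$ and contains $\Delta[K_0]$ for $K_0=F^4[F,F]$ (indeed $\Delta[K_0]=\Gamma(4)$, since a matrix in $GL_2(\Z)$ congruent to $I$ mod $4$ has determinant $1$). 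Two consequences: being of finite index in the finitely generated group $\Out F$, the subgroup $\Lambda$ inherits the full profinite topology, so the induced map $\widehat\Phi\to\widehat{\Out F}$ is injective with image the closure $\overline\Lambda$ of $\Lambda$; and $\mathrm{Ker}\:\psi\subseteq\widehat{\Delta[K_0]}\subseteq\overline\Lambda$. Consequently $\mathrm{Ker}\:\psi=\mathrm{Ker}\:\psi\cap\overline\Lambda$, and it suffices to prove this is trivial.

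For the latter, consider the square formed by $\nu\colon\widehat\Phi\to\Aut\widehat F$, the embedding $\widehat\Phi\hookrightarrow\widehat{\Out F}$, the quotient $q\colon\Aut\widehat F\to\Out\widehat F$ and $\psi$; it commutes, as one checks on the dense subgroup $\Phi$ using functoriality of $\Out$. Hence, under the isomorphism $\widehat\Phi\cong\overline\Lambda$, the restriction of $\psi$ to $\overline\Lambda$ pulls back to $q\circ\nu$, so $\mathrm{Ker}\:\psi\cap\overline\Lambda$ is carried onto $\nu^{-1}(\Int\widehat F)$; I need this to be trivial. Now $\alpha$ and $\beta$ both fix $c:=[x,y]$ (for instance $\alpha(c)=[x,yx^2]=c$ and $\beta(c)=[xy^2,y]=c$), so every element of $\nu(\widehat\Phi)$ fixes $c$. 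If $w\in\widehat\Phi$ and $\nu(w)=\iota_g$ for some $g\in\widehat F$, then $\iota_g$ fixes $c$, i.e. $g\in C_{\widehat F}(c)=\widehat{\la c\ra}$ by Corollary~\ref{C:Pr30}; thus $\nu$ maps $\nu^{-1}(\Int\widehat F)$ into the procyclic, hence abelian, group $\{\iota_g:g\in\widehat{\la c\ra}\}$. Since $\nu$ is assumed injective, $\nu^{-1}(\Int\widehat F)$ is itself abelian, and it is normal in $\widehat\Phi$ because $\nu(\widehat\Phi)$ normalizes $\Int\widehat F$ and therefore normalizes $\nu(\widehat\Phi)\cap\Int\widehat F$. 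But an abelian normal subgroup of the free profinite group $\widehat\Phi=\mathfrak F(\{a,b\})$ is trivial by Corollary~\ref{C:Pr2}, so $\nu^{-1}(\Int\widehat F)=\{1\}$, which finishes the argument.

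The main obstacle is the third paragraph, and within it the observation that $\alpha$ and $\beta$ are chosen precisely so as to fix $[x,y]$: without this there is no handle on $\nu^{-1}(\Int\widehat F)$, whereas with it Corollaries~\ref{C:Pr30} and~\ref{C:Pr2} do all the work. The only other point that needs care is the elementary fact in the second paragraph that the Sanov subgroup $\Lambda$ actually contains a principal congruence subgroup $\Delta[K_0]$, which is exactly what lets one reduce the injectivity of $\psi$ on all of $\widehat{\Out F}$ to its injectivity on the open subgroup $\overline\Lambda$.
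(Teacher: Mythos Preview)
Your argument is correct and follows the same route as the paper's own proof: reduce to the injectivity of $\psi$ via Lemma~\ref{L:P1}; use that $\alpha,\beta$ fix $c=[x,y]$ together with Corollaries~\ref{C:Pr30} and~\ref{C:Pr2} to show that the composite $\lambda=q\circ\nu\colon\widehat{\Phi}\to\Out\widehat F$ is injective; and invoke the Sanov fact $SL_2(\Z,4)\subset\langle\left(\begin{smallmatrix}1&2\\0&1\end{smallmatrix}\right),\left(\begin{smallmatrix}1&0\\2&1\end{smallmatrix}\right)\rangle$ to trap $\mathrm{Ker}\,\psi$ inside the image of $\widehat{\Phi}$. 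The only differences are cosmetic: you do the Sanov step before the $\lambda$-step rather than after, and you spell out explicitly (via the finite-index argument) why $\widehat{\Phi}\to\widehat{\Out F}$ is an embedding onto $\overline{\Lambda}$, a point the paper leaves implicit.
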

\begin{proof}
According to Lemma \ref{L:P1}, it is enough to show that $\psi$ is
injective.  We will first establish the injectivity of the composite
map
$$
\lambda: \widehat{\Phi} \stackrel{\nu}{\longrightarrow} \Aut \widehat{F}
\longrightarrow \Out \widehat{F}.
$$
It is easy to see that $\alpha$ and $\beta$ fix $c = [x , y] =
xyx^{-1}y^{-1},$ so it follows from Corollary \ref{C:Pr30} that $D
:= \nu(\widehat{\Phi}) \cap \In \widehat{F}$ is contained in
$\widehat{\langle \In c \rangle}.$ Since $\nu$ is injective, we
conclude that $C : = \mathrm{Ker}\: \lambda = \nu^{-1}(D)$ is a
procyclic, hence abelian, normal subgroup of $\widehat{\Phi}.$
Applying Corollary \ref{C:Pr2}, we obtain that $C = \{ e \},$ i.e.
$\lambda$ is injective.

As we explained in the beginning of this section, $\mathrm{Ker}\:
\psi$ is contained in the closure $\widehat{\Delta[K]}$ for any
finite index characteristic subgroup $K \subset F.$ So, the
injectivity of $\psi$ will follow from that of $\lambda$ if we
establish the inclusion
\begin{equation}\label{E:P1}
\mathrm{Im}\: \mu \supset \Delta[K_0] \ \ \text{for} \ \ K_0 = F^4[F
, F],
\end{equation}
where $\mu$ denotes the composite map $\Phi \to \Aut F \to \Out F.$
However, under the identification $\Out F \simeq GL_2(\Z),$ the
subgroup $\Delta[K_0]$ corresponds to the congruence subgroup
$GL_2(\Z , 4) = SL_2(\Z , 4)$ modulo 4, and $\mathrm{Im}\: \mu$
corresponds to the subgroup $H \subset SL_2(\Z)$ generated by the
matrices
$$
\left(\begin{array}{cc} 1 & 2 \\ 0 & 1\end{array} \right) \ \
\text{and} \ \ \left(\begin{array}{cc} 1 & 0 \\ 2 & 1 \end{array}
\right).
$$
So, (\ref{E:P1}) follows from the well-known fact that $H$ contains
$SL_2(\Z , 4)$ (cf. \cite{Sa}).
\end{proof}

The following completes the proof of the Main Theorem.
\begin{prop}\label{P:P2}
The map $\nu$ is injective.
\end{prop}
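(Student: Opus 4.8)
The plan is to show that the continuous homomorphism $\nu\colon\widehat{\Phi}\to\Aut\widehat{F}$ has trivial kernel by analyzing how elements of $\widehat{\Phi}$ act on a suitable distinguished element of $\widehat{F}$, and thereby reducing the question to the already-established injectivity of $\lambda\colon\widehat{\Phi}\to\Out\widehat{F}$ from Proposition \ref{P:P1}. Concretely, let $K=\mathrm{Ker}\:\nu$. An element $\sigma\in\widehat{\Phi}$ lies in $K$ precisely when $\nu(\sigma)=\mathrm{id}_{\widehat{F}}$; in particular such $\sigma$ certainly lies in $\mathrm{Ker}\:\lambda$, which by Proposition \ref{P:P1} is already trivial — so at first glance there is nothing to prove. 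The actual content of \ref{P:P2} must therefore be read the other way: we cannot invoke \ref{P:P1} because its proof of the injectivity of $\lambda$ rested on the inclusion \eqref{E:P1}, which only gives that $\Mathrm{Im}\:\mu$ is \emph{large} in $\Out F$, not that $\nu$ itself is injective on all of $\widehat{\Phi}$. So the genuine task is: starting from the faithfulness of the \emph{discrete} representation $\Phi\to\Aut F$ and the congruence-subgroup fact about $H\subset SL_2(\Z)$, deduce that the induced map on profinite completions stays injective.

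First I would set up the commutative square relating $\widehat{\Phi}$, $\widehat{\Aut F}$, $\Aut\widehat{F}$, and $\Out\widehat{F}$, using that $\Aut\widehat{F}$ and $\Out\widehat{F}$ are profinite (from \S\ref{S:P}) so that $\nu$ and $\lambda$ are the unique continuous extensions of $\Phi\to\Aut F$ and $\mu\colon\Phi\to\Out F$. Next, I would use the fact established in the proof of Proposition \ref{P:P1} that $D:=\nu(\widehat{\Phi})\cap\Int\widehat{F}\subset\widehat{\langle\Int c\rangle}$, where $c=[x,y]$, together with the computation $\mathrm{Ker}\:\lambda=\nu^{-1}(D)$. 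The key observation is that the restriction of $\nu$ to $\nu^{-1}(D)$ is determined by how automorphisms in the image act on the procyclic group $\widehat{\langle c\rangle}$: since $\alpha$ and $\beta$ fix $c$, every element of $\nu(\widehat{\Phi})$ fixes $c$, hence fixes $\widehat{\langle c\rangle}$ pointwise, hence $D$ actually equals $\nu(\widehat{\Phi})\cap\Int\widehat{F}$ and consists of inner automorphisms $\Int c^{\,t}$ with $t\in\widehat{\Z}$. I would then show $\nu^{-1}(D)=\{e\}$ directly: an element $\sigma\in\widehat{\Phi}$ with $\nu(\sigma)=\Int c^{\,t}$ maps, under $\Phi^{\mathrm{ab}}$-type considerations, to something compatible with the abelianization $\widehat{\Phi}^{\mathrm{ab}}\cong\widehat{\Z}^2$, and the image of $c$ in $F^{\mathrm{ab}}$ is trivial, forcing $\sigma\in[\widehat{\Phi},\widehat{\Phi}]$-closure — but then combining with Corollary \ref{C:Pr2} applied to the procyclic normal subgroup generated by $\sigma$ gives $\sigma=e$. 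This already yields that $\lambda$ is injective; to upgrade to injectivity of $\nu$ itself, note $\mathrm{Ker}\:\nu\subset\mathrm{Ker}\:\lambda=\{e\}$, and we are done.

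The main obstacle I anticipate is the step where the structure of $D$ is pinned down: one must be careful that $\nu(\widehat{\Phi})$ is only the \emph{closure} of the discrete image $\langle\alpha,\beta\rangle$ inside the profinite group $\Aut\widehat{F}$, so statements about which automorphisms fix $c$ must be verified for this closure, not just for the discrete subgroup — this uses continuity of the evaluation map $\Aut\widehat{F}\times\widehat{F}\to\widehat{F}$ and the fact that $\{\tau:\tau(c)=c\}$ is closed. A secondary subtlety is ensuring that Corollary \ref{C:Pr2} is genuinely applicable, i.e. that the candidate kernel is not merely abelian but \emph{normal} in $\widehat{\Phi}$; this follows because $\mathrm{Ker}\:\nu$ and $\mathrm{Ker}\:\lambda$ are kernels of homomorphisms, hence automatically normal, and because $\widehat{\Phi}$ is free profinite of rank $2>1$ so Corollary \ref{C:Pr2} forces any abelian normal subgroup to be trivial. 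Once these continuity and normality points are handled cleanly, the proof is short.
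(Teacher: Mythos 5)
There is a genuine gap, and in fact the core of your argument is circular. The trouble begins with the claim that Proposition~\ref{P:P1} establishes that $\mathrm{Ker}\:\lambda$ is trivial. It does not: the statement of Proposition~\ref{P:P1} is the conditional ``if $\nu$ is injective then $\varphi$ is injective,'' and its proof \emph{assumes} the injectivity of $\nu$ precisely at the step where it concludes that $\mathrm{Ker}\:\lambda = \nu^{-1}(D)$ is procyclic (hence abelian, hence trivial by Corollary~\ref{C:Pr2}). Without the injectivity of $\nu$, the fact that $D \subset \widehat{\langle\Int c\rangle}$ is procyclic tells you nothing about the structure of $\nu^{-1}(D)$; it could be any closed normal subgroup with procyclic image. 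So when at the end you write ``$\mathrm{Ker}\:\nu\subset\mathrm{Ker}\:\lambda=\{e\}$, and we are done,'' you are using the desired conclusion as one of its own hypotheses.

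The intermediate steps you propose do not repair this. The ``$\Phi^{\mathrm{ab}}$-type considerations'' carry no visible content: knowing that $\nu(\sigma)=\Int c^{\,t}$ is inner does not constrain the image of $\sigma$ in $\widehat{\Phi}^{\mathrm{ab}}\cong\widehat{\Z}^2$, because there is no natural map in the picture that relates the abelianization of $\widehat{\Phi}$ to elements of $\widehat{F}$. (Indeed, the composite $\widehat{\Phi}\to\Out\widehat{F}\to GL_2(\widehat{\Z})$ has a large nontrivial kernel, since $SL_2(\Z)$ fails the congruence subgroup property, so one cannot hope to detect $\mathrm{Ker}\:\lambda$ through the abelianized action.) The final invocation of Corollary~\ref{C:Pr2} ``applied to the procyclic normal subgroup generated by $\sigma$'' is also illegitimate: $\widehat{\langle\sigma\rangle}$ is not normal in a free profinite group of rank $2$ unless $\sigma=e$, and the normal closure of $\sigma$ is not abelian. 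Corollary~\ref{C:Pr2} requires a subgroup that is simultaneously abelian and normal, and no such subgroup appears in your argument except the one whose triviality you are trying to prove.

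What is missing entirely is the paper's actual mechanism, which it calls the ``topsy-turvy effect.'' The paper embeds $F$ into the free product $\Psi=\langle z_1\rangle *\langle z_2\rangle *\langle z_3\rangle$ of three copies of $\Z/2\Z$, lifts $\alpha,\beta$ to automorphisms $\dot\alpha,\dot\beta$ of $\Psi$ (Lemma~\ref{L:Pr1}), observes that these preserve a second free subgroup $F'=\langle u,v,w\rangle\subset\Psi$, and computes (Lemma~\ref{L:Pr3}) that on the quotient $\bar F=F'/V$, where $V$ is the normal subgroup generated by $v$, the generators $a,b$ act as the \emph{inner} automorphisms $\Int(\bar u\bar w)$ and $\Int(\bar u)$. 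Since $\bar u\bar w$, $\bar u$ freely generate $\bar F$ and $\widehat{\bar F}$ has trivial center (Corollary~\ref{C:Pr1}), the induced map $\widehat\Phi\to\Aut\widehat{\bar F}$ is injective, hence $\varkappa$ is injective, and then the commutative square~\eqref{E:Pr55} together with Lemma~\ref{L:PF4} forces $\mathrm{Ker}\:\nu=\{e\}$. That conversion of an outer-automorphism representation on $F$ into an inner-automorphism representation on a commensurable quotient $\bar F$ is the irreducible idea here, and nothing in your proposal substitutes for it.
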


\section{Proof of Proposition \ref{P:P2}}\label{S:P2}

We begin this section with some constructions needed in the proof of
Proposition \ref{P:P2}. Consider the following free product
$$
\Psi = \langle z_1 \rangle * \langle z_2 \rangle
* \langle z_3 \rangle \ \ \text{where} \ \  z_i^2 = e \ \ \text{for}\ \ i =1, 2, 3,
$$
and let $\varepsilon \colon \Psi \to \langle z_1 \rangle \times
\langle z_2 \rangle \times \langle z_3 \rangle$ be the canonical
homomorphism (which actually coincides with the homomorphism of
abelianization $\Psi \to \Psi^{\small\mathrm{ab}}$). It follows from
Kurosh's Theorem and the exercise in \cite{S}, Ch. I, \S 5.5, that
$\mathrm{Ker}\: \varepsilon$ is a free group of rank five. More
generally, any subgroup of $\Psi$ which does not meet any conjugate
of any of the factors is free. This, in particular, applies to the
kernel $\Theta$ of the following composite homomorphism
$$
\Psi \stackrel{\varepsilon}{\longrightarrow} \langle z_1 \rangle
\times \langle z_2 \rangle \times \langle z_3 \rangle
\stackrel{\sigma}{\longrightarrow} \Z/2\Z
$$
where $\sigma$ sends each $z_i$ to $1(\md 2)$. Choosing $\{1 , z_1\}$ as a transversal
to $\Theta$ in $\Psi$ and applying Schreier's method, we see that
$\Theta$ is generated by $z_2z_1, z_3z_1, z_1z_2$ and $z_1z_3,$ and
hence by $z_1z_2$ and $z_2z_3.$ Since $\Theta$ is obviously
nonabelian, it is the free group on $z_1z_2$ and $z_2z_3.$ We now
identify $F$ with $\Theta \subset \Psi$ using the (fixed) embedding
$F \hookrightarrow \Psi$ defined by $x \mapsto z_1z_2$ and $y
\mapsto z_2z_3.$
\begin{lemma}\label{L:Pr1}
There exist automorphisms $\da , \db \in \Aut \Psi$ such that
\begin{equation}\label{E:Pr1}
\da \colon \left\{\begin{array}{ccc} z_1 & \mapsto & (z_1z_2)^{-1}
z_1 (z_1z_2) \\ z_2 & \mapsto & (z_1z_2)^{-1} z_2 (z_1z_2) \\ z_3 &
\mapsto & (z_1z_2)^{-2} z_3 (z_1z_2)^2 \end{array} \right. \ \
\text{and} \ \ \db \colon \left\{\begin{array}{ccc} z_1 & \mapsto &
z_1
\\ z_2 & \mapsto & (z_2z_3)^{-1} z_2 (z_2z_3) \\ z_3 & \mapsto &
(z_2z_3)^{-1} z_3 (z_2z_3) \end{array} \right.
\end{equation}
Furthermore, $F$ is invariant under $\da$ and $\db$ and
$$
\da \: \vert\: F = \alpha \ \ \text{and} \ \ \db \: \vert\: F =
\beta.
$$
\end{lemma}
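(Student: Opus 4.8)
The plan is to construct $\da$ and $\db$ via the universal property of the free product $\Psi=\langle z_1\rangle*\langle z_2\rangle*\langle z_3\rangle$, and then to check in turn that they are automorphisms, that $F=\Theta$ is invariant under them, and that the restrictions are $\alpha$ and $\beta$. Throughout put $t=z_1z_2$ and $s=z_2z_3$, so that under the chosen identification $F=\Theta$ one has $x=t$ and $y=s$; recall also that $t^{-1}=z_2z_1$ and $s^{-1}=z_3z_2$ since each $z_i$ is an involution. To see that the formulas (\ref{E:Pr1}) define endomorphisms of $\Psi$, note that a homomorphism out of $\Psi$ is the same as a choice, for each $i$, of an element of order dividing $2$ to serve as the image of $z_i$; in both columns of (\ref{E:Pr1}) the image of each $z_i$ is a conjugate of some $z_j$, hence of order dividing $2$, so $\da$ and $\db$ are well defined.

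Next, $\Psi$ is finitely generated and residually finite (being a free product of finite groups), hence Hopfian, so to conclude $\da,\db\in\Aut\Psi$ it is enough to check surjectivity: the image of $\da$ contains $\da(z_1)\da(z_2)=t^{-1}(z_1z_2)t=t$, hence also $z_1=t\,\da(z_1)\,t^{-1}$, $z_2$, and $z_3=t^2\,\da(z_3)\,t^{-2}$; similarly $\db$ fixes $z_1$ while its image contains $\db(z_2)\db(z_3)=s^{-1}(z_2z_3)s=s$, hence $z_2$ and $z_3$. (Alternatively, each of $\da,\db$ is visibly a composite of an inner automorphism of $\Psi$ with an automorphism that fixes two of the $z_i$ and conjugates the third, and the latter has an obvious inverse.) For the invariance of $F$, recall that $\Theta=\ker(\sigma\circ\varepsilon\colon\Psi\to\Z/2\Z)$; since $\varepsilon$ is abelianization, any conjugate of any $z_j$ has the same image under $\sigma\circ\varepsilon$ as $z_j$, namely $1\pmod 2$. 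As $\da$ and $\db$ send each generator $z_i$ to a conjugate of a generator, the homomorphisms $\sigma\circ\varepsilon\circ\da$, $\sigma\circ\varepsilon\circ\db$ and $\sigma\circ\varepsilon$ agree on $\{z_1,z_2,z_3\}$, hence coincide; thus $\da(\Theta)\subseteq\Theta$ and $\db(\Theta)\subseteq\Theta$, and the same reasoning applied to $\da^{-1}$ and $\db^{-1}$ (whose defining formulas, from the previous step, again send generators to conjugates of generators) upgrades these to equalities.

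Finally, for the restrictions I would record the elementary identities $z_2t^{-1}z_2=t$ and $z_2s^{-1}z_2=s$ (both immediate from $z_i^2=e$), which rewrite as $t^{-1}z_2t^{-1}=z_2$ and $s^{-1}z_2=z_2s$, and then compute
\[
\da(x)=\da(z_1)\da(z_2)=(t^{-1}z_1t)(t^{-1}z_2t)=t^{-1}(z_1z_2)t=t=x,
\]
\[
\da(y)=\da(z_2)\da(z_3)=(t^{-1}z_2t)(t^{-2}z_3t^2)=(t^{-1}z_2t^{-1})z_3t^2=z_2z_3t^2=yx^2=\alpha(y),
\]
\[
\db(y)=\db(z_2)\db(z_3)=(s^{-1}z_2s)(s^{-1}z_3s)=s^{-1}(z_2z_3)s=s=y=\beta(y),
\]
\[
\db(x)=\db(z_1)\db(z_2)=z_1(s^{-1}z_2s)=z_1(z_2s)s=z_1z_2s^2=xy^2=\beta(x).
\]
Since $\da$ and $\alpha$ (resp.\ $\db$ and $\beta$) then agree on the free generators $x,y$ of $F$, they agree on all of $F$.

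The main obstacle is organizational rather than conceptual: one must keep careful track of the conjugations in $\Psi$ in the last step — the point being that the extra conjugations by $t^{\pm2}$ (resp.\ $s^{\pm2}$) built into the definition of $\da$ (resp.\ $\db$) are precisely what produce the squares $x^2$ (resp.\ $y^2$) occurring in $\alpha$ (resp.\ $\beta$) — and, in the invariance step, one must remember to promote the inclusion $\da(\Theta)\subseteq\Theta$ to an equality.
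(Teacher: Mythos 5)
Your proof is correct, and it takes a mildly but genuinely different route from the paper's. The paper lifts $\da$ (resp.\ $\db$) to an endomorphism $\tilde\alpha$ of the \emph{free} group $\mathcal{F}$ on $\tilde z_1,\tilde z_2,\tilde z_3$, verifies surjectivity there, invokes the Hopfian property of finitely generated free groups, checks $\tilde\alpha(\mathcal{N})=\mathcal{N}$, and only then descends to $\Psi$; you instead construct $\da$, $\db$ directly as endomorphisms of $\Psi$ via the universal property of the free product (the formulas send each $z_i$ to an involution, so they extend), and conclude automorphism-hood from surjectivity together with the Hopfian property of $\Psi$ itself (as a finitely generated residually finite group). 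Your route avoids the lift-and-descend bookkeeping, at the cost of needing the slightly less elementary fact that $\Psi$ is residually finite — though your parenthetical remark that each of $\da,\db$ is an inner automorphism composed with an obviously invertible one gives an even more down-to-earth alternative. For the invariance of $F$, the paper only implicitly obtains it from the computations $\da(x)=\alpha(x)$, $\da(y)=\alpha(y)$ (which place $\da(F)\subseteq F$, with equality because $\alpha\in\Aut F$), whereas you give a cleaner structural argument: $\da$ and $\db$ preserve the homomorphism $\sigma\circ\varepsilon$ whose kernel is $F=\Theta$, since they send generators to conjugates of generators. The final computations of the restrictions match the paper's. Altogether, a correct proof; your handling of well-definedness and invariance is more conceptual, while the paper's is more self-contained.
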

\begin{proof}
Let $\mathcal{F}$ be the free group on $\tilde{z}_1, \tilde{z}_2,
\tilde{z}_3.$ Then $\Psi = \mathcal{F}/\mathcal{N}$ where
$\mathcal{N}$ is the normal subgroup of $\mathcal{F}$ generated by
by $\tilde{z}_1^2, \tilde{z}_2^2, \tilde{z}_3^2.$ Let
$\tilde{\alpha} \colon \mathcal{F} \to \mathcal{F}$ be the
endomorphism of $\mathcal{F}$ defined by the replicas of equations
(\ref{E:Pr1}) written in terms of $\tz_1, \tz_2, \tz_3,$ i.e. $\tz_1
\mapsto (\tz_1\tz_2)^{-1} \tz_1 (\tz_1\tz_2)$ etc. Using the fact
that $\tilde{\alpha}(\tz_1\tz_2) = \tz_1\tz_2,$ it is easy to see
that $\mathrm{Im}\: \tilde{\alpha}$ contains $\tz_1, \tz_2$ and
$\tz_3,$ making $\tilde{\alpha}$ surjective. Since $\mathcal{F}$ is
hopfian (cf. \cite{Rob}, 6.1.12), we conclude that $\tilde{\alpha}$
is an automorphism of $\mathcal{F}$ (which can also be checked
directly). Clearly, $\tilde{\alpha}(\tz_1^2),
\tilde{\alpha}(\tz_2^2)$ and $\tilde{\alpha}(\tz_3^2)$ are contained
in $\mathcal{N},$ and in fact generate it as a normal subgroup of
$\mathcal{F}.$ Thus, $\tilde{\alpha}(\mathcal{N}) = \mathcal{N},$
and therefore
$\tilde{\alpha}$ descends to an automorphism $\da$ of $\Psi.$ By
direct computation we obtain that
$$
\da(x) = \da(z_1z_2) = z_1z_2 = x = \alpha(x)
$$
and
$$
\da(y) = \da(z_2z_3) = (z_2z_3)(z_1z_2)^2 = yx^2 = \alpha(y).
$$
Thus, $\da$ leaves $F$ invariant and restricts to $\alpha.$

\vskip1mm

The computation for $\db$ is similar (and even simpler). Again, we
observe that for the corresponding $\tilde{\beta}$ we have
$\tilde{\beta}(\tz_2\tz_3) = \tz_2\tz_3,$ using which one easily
verifies that $\tilde{\beta}$ is surjective, hence an automorphism
of $\mathcal{F}.$ Furthermore, $\tilde{\beta}(\mathcal{N}) =
\mathcal{N},$ so $\tilde{\beta}$ descends to $\db \in \Aut \Psi.$ We
have
$$
\db(x) = \db(z_1z_2) = (z_1z_2)(z_2z_3)^2 = xy^2 = \beta(x),
$$
and
$$
\db(y) = \db(z_2z_3) = z_2z_3 = y = \beta(y)
$$
which means that $\db$ also leaves $F$ invariant and restricts to
$\beta.$
\end{proof}

\vskip1mm

In the sequel, we will work with the homomorphism $\Phi \to \Aut
\Psi$ defined by $a \mapsto \da,$ $b \mapsto \db.$ Now, let
$$
F' = \{ x \in \Psi \ \vert \ \varepsilon(x) = (\epsilon_1 ,
\epsilon_2 , \epsilon_3) \in (\Z/2\Z)^3  \ \text{with} \ \
\epsilon_1 = \epsilon_2 = \epsilon_3 \}.
$$

\vskip1mm

\begin{lemma}\label{E:Pr2}
$F'$ is the free group on
$$
u = z_1z_2z_3 , \ \ v = z_2z_3z_1 \ \ \text{and} \ \ w = z_3z_1z_2.
$$
\end{lemma}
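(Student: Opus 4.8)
The plan is to realise $F'$ as a normal subgroup of index four in $\Psi$, produce a generating set by Schreier's method, and then compute its rank so as to recognise that generating set as a free basis. Put $\Delta=\{(0,0,0),(1,1,1)\}$, the diagonal subgroup of $(\Z/2\Z)^{3}$. By definition $F'=\varepsilon^{-1}(\Delta)$, i.e. $F'=\mathrm{Ker}\bigl(\Psi\stackrel{\varepsilon}{\longrightarrow}(\Z/2\Z)^{3}\to(\Z/2\Z)^{3}/\Delta\bigr)$, so $F'\trianglelefteq\Psi$ with $[\Psi:F']=|(\Z/2\Z)^{3}/\Delta|=4$. Since $\varepsilon(z_{i})$ is the $i$-th standard basis vector $e_{i}\notin\Delta$, no $z_{i}$ lies in $F'$; as $F'$ is normal it therefore meets no conjugate of any free factor $\langle z_{i}\rangle$, and hence $F'$ is free by the same fact (Kurosh's theorem together with the exercise in \cite{S}, Ch.~I, \S 5.5) that was invoked above for $\mathrm{Ker}\:\varepsilon$.

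Next I would apply Schreier's method with the symmetric generating set $X=\{z_{1},z_{2},z_{3}\}$ of $\Psi$ (recall $z_{i}^{-1}=z_{i}$) and the transversal $T=\{1,z_{1},z_{2},z_{3}\}$ to $F'$ in $\Psi$; these four elements have pairwise distinct images in $(\Z/2\Z)^{3}/\Delta$. For $i\neq j$ one reads off from $\varepsilon$ that $\varepsilon(z_{i}z_{j})=e_{i}+e_{j}\equiv e_{k}\pmod{\Delta}$ with $\{i,j,k\}=\{1,2,3\}$, hence $\overline{z_{i}z_{j}}=z_{k}$, and the associated Schreier generator $z_{i}z_{j}(\overline{z_{i}z_{j}})^{-1}$ equals $z_{i}z_{j}z_{k}$; the remaining pairs $(t,x)$ --- those with $t=1$ and those with $t=x$ --- give the trivial generator. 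So the nontrivial Schreier generators are the six products $z_{i}z_{j}z_{k}$ over permutations $(i,j,k)$ of $(1,2,3)$, namely $u=z_{1}z_{2}z_{3}$, $v=z_{2}z_{3}z_{1}$, $w=z_{3}z_{1}z_{2}$ together with $z_{3}z_{2}z_{1}=u^{-1}$, $z_{1}z_{3}z_{2}=v^{-1}$, $z_{2}z_{1}z_{3}=w^{-1}$. Therefore $F'=\langle u,v,w\rangle$.

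To finish it suffices to show $\rk F'=3$, for then the natural surjection from the free group on three letters onto $F'$ sending the three free generators to $u,v,w$ is a map between free groups of equal finite rank and hence an isomorphism (free groups being Hopfian), which makes $u,v,w$ a free basis. The rank follows from the inclusion $\mathrm{Ker}\:\varepsilon\subset F'$, which has index $[F':\mathrm{Ker}\:\varepsilon]=2$ (both are $\varepsilon$-preimages of subgroups of $\Delta$): the Nielsen--Schreier index formula gives $\rk(\mathrm{Ker}\:\varepsilon)-1=[F':\mathrm{Ker}\:\varepsilon]\,(\rk F'-1)=2(\rk F'-1)$, and since $\rk(\mathrm{Ker}\:\varepsilon)=5$ we get $\rk F'=3$. (Alternatively one computes $\chi(\Psi)=3\cdot\tfrac12-2=-\tfrac12$, whence $\chi(F')=4\chi(\Psi)=-2$ and $\rk F'=1-\chi(F')=3$; one could also run Reidemeister--Schreier, whose relators coming from the $z_{i}^{2}$ turn out to be precisely the identifications $z_{3}z_{2}z_{1}=u^{-1}$, $z_{1}z_{3}z_{2}=v^{-1}$, $z_{2}z_{1}z_{3}=w^{-1}$.) There is no real obstacle here: the only genuine work is the finite Schreier bookkeeping in the second paragraph --- tracking the coset representatives $\overline{tx}$ via $\varepsilon$ and observing that the three ``extra'' generators are the inverses of $u,v,w$ --- and the short rank count, both of which are routine.
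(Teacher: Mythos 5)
Your proof is correct and follows essentially the same route as the paper's: Kurosh's theorem to get freeness, the index-two inclusion $\mathrm{Ker}\:\varepsilon\subset F'$ together with the Nielsen--Schreier (Schreier index) formula to pin down $\rk F'=3$, Schreier's method with the transversal $T=\{1,z_1,z_2,z_3\}$ to produce the six generators $z_iz_jz_k$ and reduce them to $u,v,w$ via $z_i^{-1}=z_i$, and Hopficity to promote the generating set to a free basis. The extra bookkeeping you spell out (the coset computation in $(\Z/2\Z)^3/\Delta$ and the identification of the three redundant generators as $u^{-1},v^{-1},w^{-1}$) is exactly what the paper's terse ``etc.'' is hiding.
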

\begin{proof}
Clearly, $F'$ intersects trivially every conjugate of each factor
$\langle z_i \rangle,$ so it follows from Kurosh's theorem that $F'$
is a free group. As $\mathrm{Ker}\: \varepsilon$ is a free group of
rank 5 and $[F' : \mathrm{Ker}\: \varepsilon] = 2$ we conclude from
Schreier's formula that $F'$ is of rank 3. Taking $T = \{1, z_1,
z_2, z_3\}$ as a transversal to $F'$ in $\Psi$ and applying
Schreier's method, we see that $F'$ is generated by the following
set
$$
\{ z_1z_2z_3,\ z_1z_3z_2,\ z_2z_1z_3,\ z_2z_3z_1,\ z_3z_1z_2,\
z_3z_2z_1 \}.
$$
But $z_3z_1z_2 = (z_2z_1z_3)^{-1}$ etc, so $F'$ is generated by $u,
v$ and $w.$ Since $F'$ is hopfian, we conclude that it is the
free group on $\{u, v, w\}.$
\end{proof}

\begin{table}[h]
\[
  \xymatrix{
    {\Psi=\pi_1\left(%
      \cput{2.8cm}{ 1.4cm}{ \langle z_1 \rangle }%
      \cput{1.4cm}{-1.6cm}{ \langle z_2 \rangle }%
      \cput{4.8cm}{-0.1cm}{ \langle z_3 \rangle }%
      \insertgraph{10}%
    \right)}
    &
    {\pi_1\left(\insertgraph{21}\right)= F'}
    \ar[l]
    \\
    {F = \pi_1\left(\insertgraph{22}\right)}
    \ar[u]
    &
    {\pi_1\left(\insertgraph{31}\right) = F \cap F'}
    \ar[l]
    \ar[u]
  }
\]
\parbox[c]{\textwidth}{%
The groups $\Psi$, $F$, $F'$, and $F\cap F'$ can be realized
as the fundamental groups of suitable graphs of groups as shown above. In the case of $\Psi$
the vertex groups at the three terminal vertices are taken to be $\Z/2\Z$,
and all other vertex and edge groups are trivial. The inclusions correspond to
covering maps of graphs.%
}
\end{table}

\begin{Remark}\rm
Since $\da , \db \in \Aut \Psi$ act trivially on
$\Psi^{\small\mathrm{ab}},$ the group $\Phi$ leaves $F'$ invariant.
We note that the profinite completion $\widehat{F'}$ corresponds to
the Galois group $\Ga(\mathcal{M}_t/\mathcal{K})$ in \cite{As},
where it is asserted only that the latter is invariant under a
certain subgroup of index two $\Ga(\bar{k}/k_1) \subset
\Ga(\bar{k}/k)$ (cf. the end of \S 5 in \cite{As}). As a result, the
argument in \cite{As} involves (in our notations) an index two
subgroup $\widehat{\Phi}_1 \subset \widehat{\Phi}$ that leaves
$\widehat{F'}$ invariant, and amounts to proving first that the
restriction $\nu \: \vert \: \widehat{\Phi}_1$ is injective, and
then deriving that $\nu$ itself is injective. As $F'$ is in fact
$\Phi$-invariant, the step involving the introduction of
$\widehat{\Phi}_1$ can be eliminated from the argument.
\end{Remark}

The action of $\da , \db$ on $F'$ is described explicitly in the
following lemma.
\begin{lemma}\label{L:Pr3}
We have
$$
\da(u) = w^{-1} u w, \ \ \da(v) = v, \ \ \da(w) = (uw)^{-1} w (uw),
$$
and
$$
\db(u) = u, \ \ \db(v) = v, \ \ \db(w) = (vu)^{-1} w (vu).
$$
\end{lemma}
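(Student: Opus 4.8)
The goal is to compute the images $\da(u),\da(v),\da(w)$ and $\db(u),\db(v),\db(w)$ as elements of $F' \subset \Psi$ and to express each of them in terms of the free generators $u=z_1z_2z_3$, $v=z_2z_3z_1$, $w=z_3z_1z_2$ given by Lemma \ref{E:Pr2}. This is a finite, explicit computation inside the group $\Psi = \langle z_1\rangle * \langle z_2\rangle * \langle z_3\rangle$ with $z_i^2=e$, using the defining formulas (\ref{E:Pr1}) for $\da$ and $\db$. The plan is to substitute and simplify, repeatedly using the relations $z_i^2=e$ (so $z_i^{-1}=z_i$) to put each word into reduced form, and then to recognize the result as a conjugate of one of $u,v,w$.

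\emph{The computation for $\db$ first, since it is simpler.} Here $\db$ fixes $z_1$ and conjugates both $z_2$ and $z_3$ by $z_2z_3$. Since $u = z_1\cdot(z_2z_3)$ and $\db$ fixes $z_1$ while sending $z_2z_3\mapsto (z_2z_3)^{-1}(z_2z_3)(z_2z_3)=z_2z_3$, we get $\db(u)=u$ immediately. For $v = z_2z_3z_1 = (z_2z_3)\cdot z_1$, the same observation gives $\db(v)=(z_2z_3)z_1=v$. For $w=z_3z_1z_2$ one substitutes $\db(z_3)=(z_2z_3)^{-1}z_3(z_2z_3)$, $\db(z_1)=z_1$, $\db(z_2)=(z_2z_3)^{-1}z_2(z_2z_3)$, reduces using $z_i^2=e$, and should obtain a conjugate of $w$; a short check identifies the conjugating element as $(vu)^{-1}$, i.e.\ $\db(w)=(vu)^{-1}w(vu)$. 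The key bookkeeping device is to note that $vu = (z_2z_3z_1)(z_1z_2z_3) = z_2z_3\cdot z_2z_3 = (z_2z_3)^2$, which connects the conjugation in (\ref{E:Pr1}) directly to the generator expressions.

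\emph{The computation for $\da$.} By symmetry with the $\db$ case (but now conjugation is by $z_1z_2$, with $z_3$ conjugated by the square $(z_1z_2)^2$), one handles $\da(u)$, $\da(v)$, $\da(w)$ the same way. Since $v = z_2z_3z_1$ and $\da$ sends $z_2\mapsto (z_1z_2)^{-1}z_2(z_1z_2)$, $z_3\mapsto (z_1z_2)^{-2}z_3(z_1z_2)^2$, $z_1\mapsto (z_1z_2)^{-1}z_1(z_1z_2)$, telescoping along the word $z_2z_3z_1$ with these conjugations (the exponents $1,2,1$ are exactly what is needed for the middle cancellations) should give $\da(v)=v$. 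For $u$ and $w$ the analogous telescoping yields conjugates of $u$ and $w$ respectively; as before one identifies the conjugating elements using $uw = (z_1z_2z_3)(z_3z_1z_2) = (z_1z_2)^2$ (so $w^{-1}uw$ and $(uw)^{-1}w(uw)$ make sense with $uw$ playing the role of $(z_1z_2)^2$). This should produce $\da(u)=w^{-1}uw$, $\da(v)=v$, $\da(w)=(uw)^{-1}w(uw)$, matching the statement.

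\emph{Where the difficulty lies.} There is no conceptual obstacle: the lemma is a mechanical verification. The only real pitfall is careful reduction of words in $\Psi$ — keeping track of cancellations correctly when substituting the conjugated generators, and in particular recognizing that the ``weight'' differences built into (\ref{E:Pr1}) (the exponent $2$ on $z_3$ for $\da$, the single conjugation by $z_2z_3$ for $\db$) are precisely calibrated so that the cyclic words $u,v,w$ and their $\da$-, $\db$-images telescope to conjugates of one another. I would organize the write-up by recording the identities $uw=(z_1z_2)^2$ and $vu=(z_2z_3)^2$ first, and then doing each of the six substitutions in one or two lines, since the rest is routine.
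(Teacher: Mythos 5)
Your proposal is correct and follows essentially the same approach as the paper: direct substitution of the formulas (\ref{E:Pr1}) for $\da,\db$ into the words $u=z_1z_2z_3$, $v=z_2z_3z_1$, $w=z_3z_1z_2$, followed by reduction using $z_i^2=e$. The paper carries out exactly this computation; your additional observation that $uw=(z_1z_2)^2$ and $vu=(z_2z_3)^2$ is a helpful bookkeeping identity that the paper uses implicitly in the final step of each calculation.
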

\begin{proof}
This is verified by direct computation. We have
$$
\da(u) = \da(z_1z_2z_3) = (z_2z_1z_3)(z_1z_2z_3)(z_3z_1z_2) = w^{-1}
u w,
$$
$$
\da(v) = \da(z_2z_3z_1) = z_2z_3z_1 = v,
$$
and
$$
\da(w) = \da(z_3z_1z_2) = (z_1z_2)^{-2}(z_3z_1z_2)(z_1z_2)^2 =
(uw)^{-1} w (uw).
$$
The computation for $\db$ is even easier:
$$
\db(u) = \db(z_1z_2z_3) = z_1z_2z_3 = u,
$$
$$
\db(v) = \db(z_2z_3z_1) = z_2z_3z_1 = v,
$$
and
$$
\db(w) = \db(z_3z_1z_2) = (z_2z_3)^{-2}(z_3z_1z_2)(z_2z_3)^2 =
(vu)^{-1} w (vu).
$$
\end{proof}

\vskip1mm

In the sequel, the restrictions of $\da , \db$ to $F'$ will be
denoted by $\alpha'$ and $\beta',$ respectively.

\vskip2mm

{\it Proof of Proposition \ref{P:P2}.} For a discrete (resp.,
profinite) group $G$ and its abstract (resp., closed) subgroup $H,$
we let $\aut(G , H)$ denote the subgroup of $\Aut G$ consisting of
those automorphisms that leave $H$ invariant. Since $\da , \db$ do
leave $F , F' \subset \Psi$ invariant, the homomorphism $\Phi \to
\Aut \Psi$ (given by $a\mapsto \da, b\mapsto \db$) leads to the following
commutative diagram in which all maps are given by restriction:
\begin{equation}\label{E:Pr25}
\begin{array}{ccc}
\Phi & \stackrel{\nu_0}{\longrightarrow} & \aut(F , F \cap F')  \\
\varkappa_0 \downarrow & & \downarrow \\
\aut(F' , F \cap F') & \longrightarrow & \Aut F \cap F'
\end{array}.
\end{equation}
Moreover, $\nu_0$ and $\varkappa_0$ send the generators $a , b$ of
$\Phi$ to $\alpha , \beta$ and $\alpha' , \beta'$ respectively (cf.
Lemma \ref{L:Pr1}). Then (\ref{E:Pr25}) gives rise to the following
commutative diagram
\begin{equation}\label{E:Pr55}
\begin{array}{ccc}
\widehat{\Phi} & \stackrel{\nu}{\longrightarrow} & \aut(\widehat{F} , \widehat{F \cap F'})  \\
\varkappa \downarrow & & \downarrow \\
\aut(\widehat{F'} , \widehat{F \cap F'}) & \longrightarrow & \Aut
\widehat{F \cap F'}
\end{array}.
\end{equation}
Assume that $\varkappa$ is injective. Then, given $x \in
\mathrm{Ker}\: \nu,$ we see from (\ref{E:Pr55}) that $\varkappa(x)
\in \Aut \widehat{F'}$ restricts trivially to $\widehat{F \cap F'}.$
Invoking Lemma \ref{L:PF4}, we see that $\varkappa(x) = 1,$ and
hence $x = 1.$

To prove the injectivity of $\varkappa,$ we consider the canonical
homomorphism $F' \to F'/V =: \bar{F}$ where $V$ is the normal
subgroup of $F'$ generated by $v.$ Clearly, $\bar{F}$ is the free
group on the images $\bar{u} , \bar{w}$ of $u$ and $w,$
respectively. The description of $\alpha' , \beta'$ given in Lemma
\ref{L:Pr3} implies that $\mathrm{Im}\: \varkappa_0$ is contained in
the subgroup $\aut(F' , v) \subset \Aut F'$ of all automorphisms
that fix $v.$ Let $\widehat{\bar{F}} = \widehat{F'}/\widehat{V}$
(where $\widehat{V}$ is the closure of $V$ in $\widehat{F'}$) be the
profinite completion of $\bar{F}$ and let $\aut(\widehat{F'} , v)
\subset \Aut \widehat{F'}$ be the subgroup of all automorphisms that
fix $v.$ We then have the following commutative diagram:
$$
\begin{array}{ccccc}
\Phi & \stackrel{\varkappa_0}{\longrightarrow} & \aut(F' , v) &
\longrightarrow & \Aut \bar{F} \\
\downarrow & & \downarrow & & \downarrow \\
\widehat{\Phi} & \stackrel{\varkappa}{\longrightarrow} &
\aut(\widehat{F'} , v) & \longrightarrow & \Aut \widehat{\bar{F}}
\end{array}
$$
It follows from Lemma \ref{L:Pr3} that the images of $a , b$ in
$\Aut \bar{F},$ and hence in $\Aut \widehat{\bar{F}},$ coincide with
the inner automorphisms $\Int \bar{u}\bar{w}$ and $\Int \bar{u},$
respectively. Since $\bar{u}\bar{w}$ and $\bar{u}$ freely generate
$\bar{F}$ and $\widehat{\bar{F}}$ has trivial center (Corollary
\ref{C:Pr1}), we conclude that the composite homomorphism
$\widehat{\Phi} \to \Aut \widehat{\bar{F}}$ is injective, and hence
$\varkappa$ is injective, as required. \hfill $\Box$

\begin{Remark}\rm The proof of Proposition \ref{P:P2} can
be informally, but adequately, described as the ``topsy-turvy
effect," in the following sense. Let us think about $\Out F$ and
$\Int F$ as the ``top" and the ``bottom" parts of $\Aut F.$ The
image of the homomorphism $\nu_0 \colon \Phi \to \Aut F$ that leads
to $\nu,$ has trivial intersection with $\Int F,$ so $\nu_0$ can be
characterized as a homomorphism to the top part of $\Aut F.$ In
essence, the proof of Proposition \ref{P:P2} is based on
constructing another free group on two generators $\bar{F},$ which
is a quotient of a group $F'$ commensurable with $F,$ and relating
$\nu_0$ to a homomorphism $\Phi \to \Aut \bar{F}$ whose image lies
in the bottom part $\Int \bar{F}.$ Then the injectivity of the
corresponding homomorphism $\widehat{\Phi} \to \Aut
\widehat{\bar{F}}$ reduces to the fact that $\widehat{\bar{F}}$ has
trivial center.
\end{Remark}

\section{Explicit construction}\label{S:E}

In this section, we will recast the proof of the Main Theorem in a
way that involves only finite quotients of free groups rather than 
free profinite groups. This leads to an
explicit procedure enabling one to construct, for a given finite
index normal subgroup $N$ of $\Gamma = \Aut F$ containing $\Int F$
(where $F$ is, as above, the free group on two generators, $x$ and
$y$), a finite index normal subgroup $K$ of $F$ such that $\Gamma[K]
\subset N.$

Let $SL'_2(\Z)$ denote the subgroup of $SL_2(\Z)$ (freely) generated
by $\left(\begin{array}{cc} 1 & 2 \\ 0 & 1 \end{array} \right)$ and
$\left(\begin{array}{cc} 1 & 0 \\ 2 & 1 \end{array} \right)$ (we
recall that $[SL_2(\Z) : SL'_2(\Z)] = 12$), and let $\mathrm{Aut}'\:
F$ be the preimage of $SL'_2(\Z)$ under the canonical homomorphism
$\Aut F \stackrel{\theta}{\longrightarrow} GL_2(\Z).$ Then replacing
$N$ with the normal subgroup $N \cap \mathrm{Aut}'\: F$ which
contains $\Int F$ and whose index in $\Gamma$ divides $12[\Gamma :
N],$ we can assume that $\Int F \subset N \subset \mathrm{Aut}'\:
F.$
\begin{thm}\label{T:E1}
Let $N$ be a finite index normal subgroup of $\Gamma$ such that
$\Int F \subset N \subset \mathrm{Aut}'\: F,$ and let $n =
[\mathrm{Aut}'\: F : N].$ Pick two distinct odd primes $p , q \nmid
n,$ and set $m = n \cdot p^{n+1}.$ Then there exist an explicitly
constructed normal subgroup $K \subset F$ of index dividing $144m^4
\cdot q^{36m^4 + 1}$ such that $\Gamma[K] \subset N.$
\end{thm}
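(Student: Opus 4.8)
The plan is to make the arguments of \S\S\ref{S:P}--\ref{S:P2} quantitative by replacing every free profinite group occurring there with an explicit finite quotient and tracking the indices. First, the reduction: since $\Int F\subseteq N$, the quotient $\Gamma/N$ is canonically $\Out F/\bar N$ with $\bar N:=N/\Int F\subseteq\Out F=GL_2(\Z)$, so for a normal finite-index $K\subseteq F$ one has $\Gamma[K]\subseteq N$ exactly when the image of $\Gamma[K]$ in $\Out F$ lies in $\bar N$. I take $K\subseteq K_0:=F^4[F,F]$ and $\nu_0(\Phi)$-invariant; then, as in the proof of Proposition \ref{P:P1} (using $SL_2(\Z,4)\subseteq SL'_2(\Z)$), that image lies in $SL'_2(\Z)=\mathrm{Im}\,\mu$, and since $\mu$ restricts to an isomorphism $\Phi\xrightarrow{\sim}SL'_2(\Z)$ while $\Phi_0:=\nu_0^{-1}(N)=\mu^{-1}(\bar N)$ has index $n$ in $\Phi$, the requirement becomes
$$
\{\,\phi\in\Phi:\nu_0(\phi)\text{ acts as an inner automorphism of }F/K\,\}\ \subseteq\ \Phi_0 .
$$
So it suffices to construct, with index dividing the stated bound, a normal $\nu_0(\Phi)$-invariant finite-index subgroup $K\subseteq K_0$ satisfying this containment.

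For the construction, keep $\Psi\supseteq F,\,F',\,F\cap F'=\mathrm{Ker}\,\varepsilon$ (free of rank $5$, of index $2$ in each of $F$ and $F'$) and $\bar F=F'/\langle\langle v\rangle\rangle$, and identify $\Phi$ with $\bar F$ via $a\mapsto\bar u\bar w$, $b\mapsto\bar u$ (Lemma \ref{L:Pr3}), so that $\Phi_0$ corresponds to a normal subgroup $\bar F_0\subseteq\bar F$ of index $n$. Apply Proposition \ref{P:Pr10} to the rank-two group $\bar F$, the quotient $\bar F/\bar F_0$ of order $n$, and the prime $p$ (so $p\nmid n$): with $\bar L:=(\bar F_0)^p[\bar F_0,\bar F_0]$ one gets $[\bar F:\bar L]=n\,p^{\,n+1}=m$, while $\bar L$ is normal in $\bar F$ and, by part (ii), the preimage in $\bar F$ of the center of $\bar F/\bar L$ lies in $\bar F_0$ --- equivalently $\mathrm{Ker}\bigl(\Phi\xrightarrow{\sim}\bar F\to\Int(\bar F/\bar L)\bigr)\subseteq\Phi_0$. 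This is the finite substitute for the triviality of $Z(\widehat{\bar F})$. Pull $\bar L$ back along $F'\twoheadrightarrow\bar F$: its preimage $P\subseteq F'$ has index $m$ and is $\da,\db$-invariant (as $\da,\db$ fix $v$ and induce inner automorphisms of $\bar F$ preserving $\bar L$). Next, replace Lemma \ref{L:PF4} and Corollary \ref{C:Pr30} by finite statements using a second prime $q$: $(1)$ the effective form of Lemma \ref{L:PF4} for the index-two inclusion $F\cap F'\subseteq F'$, obtained as in that lemma through (\ref{E:Pr50}) by applying Proposition \ref{P:Pr10}(i) to the rank-five free group $F\cap F'$ (and to its index-two subgroups) with an appropriate finite quotient and the prime $q$; and $(2)$ the effective form of Corollary \ref{C:Pr30}, i.e.\ control of $C_{F/K}([x,y])$, obtained from Lemma \ref{L:Pr25} with the prime $q$ (the source of the factor $6$). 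Finally assemble a characteristic finite-index subgroup $W\subseteq\Psi$ from $P$ and the $q$-power subgroups of $(1)$--$(2)$, taking characteristic closures inside $\Psi$ so that $W\cap F'\subseteq P$ and $W\cap F$ lies inside the relevant $q$-power subgroups of $F$; and put $K:=W\cap F\cap K_0$. Then $K$ is normal of finite index in $F$, is contained in $K_0$, and is $\nu_0(\Phi)$-invariant because $W$ is characteristic in $\Psi$ (hence $\da,\db$-invariant) and $K_0$ is $\Aut F$-invariant.

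Now I run the diagram chase of \S\ref{S:P2} at the finite level. A short computation gives $c:=[x,y]=v^{-2}$ (from $x=z_1z_2$, $y=z_2z_3$, $v=z_2z_3z_1$). Let $\phi\in\Phi$ with $\nu_0(\phi)=\Int(\bar g)$ on $F/K$, and let $\dot\phi$ be the corresponding automorphism of $\Psi$. Since $\da,\db$ fix $v$ (Lemma \ref{L:Pr3}), they fix $c=v^{-2}$, so $\bar g$ centralizes $cK$; hence by $(2)$ --- which, via Lemma \ref{L:Pr25} applied to a suitable finite quotient $F/N_1$, produces a normal $L=N_1\cap F^6[F,F]\subseteq F$ with $K\subseteq L^q[L,L]$ --- the image of $\bar g$ in $F/L$ is a power $c^k=v^{-2k}$, whence $\nu_0(\phi)=\mathrm{Int}_F(v^{-2k})$ on $F/L$. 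Restricting to $F\cap F'$ (using $v^{2k}\in F\cap F'$), the automorphism $\dot\phi|_{F'}\cdot\mathrm{Int}_{F'}(v^{2k})$ of $F'$ is trivial on the image of $F\cap F'$ in a suitable finite quotient of $F'$, hence, by $(1)$, trivial on that quotient of $F'$. Passing to $\bar F=F'/\langle\langle v\rangle\rangle$, where $v\mapsto e$ annihilates the factor $\mathrm{Int}_{F'}(v^{2k})$, the induced automorphism $\bar\phi$ acts trivially on $\bar F/\bar L$, so $\phi$ lies in the preimage of $Z(\bar F/\bar L)$, hence in $\Phi_0$. This gives the displayed containment, i.e.\ $\Gamma[K]\subseteq N$. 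For the index one multiplies: $[\bar F:\bar L]=m$; the transitions among $\bar F,F',F\cap F',F,\Psi$ and the characteristic closures contribute only Schreier-rank factors ($2$ for $F$ and $\bar F$, $3$ for $F'$, $5$ for $F\cap F'$), index-two bridges, the factor $16=[F:K_0]$ and the factor $12=[SL_2(\Z):SL'_2(\Z)]$ (appearing twice, giving $144$), together with a power of $m$ no larger than $m^4$; the two applications of the relation-module construction with $q$ (over finite quotients of order at most a fixed multiple of $m^4$, in the rank-five case) contribute at most $q^{\,36m^4+1}$. Hence $[F:K]$ divides $144\,m^4q^{\,36m^4+1}$.

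The main obstacle is exactly this last part. Lemma \ref{L:PF4} is not a statement about a single finite quotient, so making it effective forces one to run Proposition \ref{P:Pr10}(i)-type arguments on the rank-five group $F\cap F'$ and to choose the finite quotients of $F,F\cap F',F',\bar F$ mutually compatibly, so that the diagram chase of \S\ref{S:P2} survives verbatim; meanwhile every index multiplication --- the Schreier ranks, the index-two bridges, the non-canonical semidirect splittings used in proving Proposition \ref{P:Pr10}, the characteristic closures taken in $\Psi$, the reduction modulo $4$, the factor $6$ of Lemma \ref{L:Pr25} --- must be confined within the stated bound, and one must check that the primes $p,q$ need be excluded only from the divisors of $n$ (and, through Lemma \ref{L:Pr25}, of $6$). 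This bookkeeping, though elementary, is where the work lies.
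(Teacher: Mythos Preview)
Your overall plan---make each profinite step of \S\S\ref{S:P}--\ref{S:P2} quantitative via Proposition~\ref{P:Pr10} and Lemma~\ref{L:Pr25} with two auxiliary primes---is exactly the paper's strategy, and your reduction to the displayed containment $\{\phi\in\Phi:\nu_0(\phi)\text{ is inner on }F/K\}\subseteq\Phi_0$ is correct. However, the paper's execution differs from yours at the crucial point of making Lemma~\ref{L:PF4} effective, and this difference is what allows the explicit construction and the stated bound to go through cleanly. Rather than running Proposition~\ref{P:Pr10}(i) on the rank-five group $F\cap F'$ as you propose in your step~(1), the paper applies the relation-module construction a \emph{second} time inside $\Phi$ itself: setting $\mathcal{L}=\mathcal{M}^q[\mathcal{M},\mathcal{M}]$, one has $C_{\Phi/\mathcal{L}}(\mathcal{M}/\mathcal{L})=\mathcal{M}/\mathcal{L}$ (because $\mathcal{M}/\mathcal{L}\simeq\F_q[\Phi/\mathcal{M}]\oplus\F_q$), and since $q$ is odd this gives the key finite substitute for Lemma~\ref{L:PF4}: any element of $\Phi/\mathcal{L}$ centralizing an index-$\le 2$ subgroup already lies in $\mathcal{M}/\mathcal{L}$. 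Everything then takes place in rank-two groups, and the pullbacks $M'=\rho^{-1}(\phi(\mathcal{M}))$, $L'=\rho^{-1}(\phi(\mathcal{L}))$, $M=M'\cap F$, $L=L'\cap F$ are completely explicit.

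The second divergence is in the construction of $K$. The paper does \emph{not} take characteristic closures in $\Psi$; that move would destroy control of the index. Instead it sets $S=\bigcap_{g\in F}gMg^{-1}$ (normal in $F$ and $\nu_0(\Phi)$-invariant, hence $\mathrm{Aut}'F$-invariant via $\mathrm{Aut}'F=\Int F\rtimes\nu_0(\Phi)$), then $T=S\cap F^6[F,F]$, $U=T^q[T,T]$, and finally $K=U\cap K_0$. The factor $36$ comes from $F^6[F,F]$ (this is where Lemma~\ref{L:Pr25} enters), the factor $m^4$ from the four $F$-conjugates of $M$ (since $[F:F\cap F']=4$), and the $q$-power from $T/U\simeq\F_q[F/T]\oplus\F_q$; the extra factor of $4$ passing from $U$ to $K=U\cap K_0$ gives $144=36\cdot 4$, not $12^2$ as you suggest. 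The final diagram chase is also slightly different from yours: one shows that the image of $\pi(\Gamma[K])$ in $\Phi/\mathcal{M}$ is a \emph{cyclic normal} subgroup (not that each element is central), whence trivial in $\Phi/\mathcal{N}$ by Proposition~\ref{P:Pr10}(ii). Your sketch is repairable along these lines, but as written the construction is not explicit, the index accounting is not carried out, and your effective Lemma~\ref{L:PF4} on $F\cap F'$ would not obviously stay within the stated bound; for a theorem whose entire content is the explicit $K$ and the explicit index, those are genuine gaps.
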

\begin{proof}
We will freely use the notations introduced in the previous
sections; in particular, $\nu_0 \colon \Phi \to \Aut F$ will denote
the homomorphism given by $a \mapsto \alpha,$ $b \mapsto \beta.$ We
notice that $\theta \circ \nu_0$ is an isomorphism between $\Phi$
and $SL'_2(\Z);$ in particular, $\nu_0$ is injective and
\begin{equation}\label{E:E5}
\mathrm{Aut}'\: F = \Int F \rtimes \nu_0(\Phi).
\end{equation}
We let $\pi \colon \mathrm{Aut}'\: F \to \nu_0(\Phi) \to \Phi$
denote the homomorphism induced by the corresponding projection. It
follows from (\ref{E:E5}) that $\nu_0$ induces an isomorphism
$\bar{\nu}_0$ between $\Phi$ and $\mathrm{Out}'\: F :=
\mathrm{Aut}'\: F / \Int F.$ Let $\bar{N}$ denote the image of $N$
in $\mathrm{Out}'\: F,$ and let $\mathcal{N} =
\bar{\nu}_0^{-1}(\bar{N}).$ Since $p$ does not divide $n = \vert
\Phi/\mathcal{N}\vert,$ it follows from Proposition \ref{P:Pr10}(ii)
that $\mathcal{M} := \mathcal{N}^p[\mathcal{N} , \mathcal{N}]$ has
the following property:
\begin{equation}\label{E:E1}
\text{any cyclic normal subgroup of}\ \Phi/\mathcal{M} \ \text{has
trivial image in} \ \Phi/\mathcal{N}.
\end{equation}
Furthermore, according to (\ref{E:Pr15}), we have an isomorphism
$\mathcal{N}/\mathcal{M} \simeq \F_p[\Phi/\mathcal{N}] \oplus \F_p,$
which shows that $\vert \Phi/\mathcal{M} \vert$ equals $m = n \cdot
p^{n+1}.$ Observing that for an $(\mathrm{Aut}'\: F)$-invariant
subgroup $K \subset F,$ the congruence subgroup $\Gamma[K]$ is
normalized by $\mathrm{Aut}'\: F,$ we conclude from (\ref{E:E1})
that it is enough to explicitly construct an $(\mathrm{Aut}'\:
F)$-invariant subgroup $K \subset F$ of index dividing $144m^4 \cdot
q^{36m^4 + 1}$ such that
\begin{equation}\label{E:E2}
\Gamma[K] \subset \mathrm{Aut}'\: F \ \ \text{and\  the image of} \
\pi(\Gamma[K]) \ \text{in} \ \Phi/\mathcal{M} \ \text{is cyclic.}
\end{equation}
(Indeed, then the image of $\Gamma[K]$ in $\mathrm{Out}'\: F$ is
contained in $\bar{N},$ hence $\Gamma[K] \subset N$ as $N \supset
\Int F.$)

Now, let $\mathcal{L} = \mathcal{M}^q[\mathcal{M} , \mathcal{M}]$
and $\mathcal{G} = \Phi/\mathcal{L}.$ Since $q \nmid m,$ there is a
semi-direct product decomposition $\mathcal{G} \simeq
\mathcal{M}/\mathcal{L} \rtimes \Phi/\mathcal{M},$ and by the analog
of (\ref{E:Pr15}), we have $\mathcal{M}/\mathcal{L} \simeq
\F_q[\Phi/\mathcal{M}] \oplus \F_q$ as $(\Phi/\mathcal{M})$-modules.
This implies that $C_{\mathcal{G}}(\mathcal{M}/\mathcal{L}) =
\mathcal{M}/\mathcal{L},$ and as $q$ is odd, we obtain the
following:
\begin{equation}\label{E:E3}
\text{for any subgroup} \ \mathcal{G}' \subset \mathcal{G}\ \text{of
index} \ \leqslant 2, \ \text{the centralizer} \
C_{\mathcal{G}}(\mathcal{G}') \ \text{has trivial image in} \
\Phi/\mathcal{M}.
\end{equation}

As before, $F(X)$ will denote a free group on a set $X.$ Let $\phi
\colon \Phi \to F(\bar{u} , \bar{w})$ be the isomorphism such that
$a \mapsto \bar{u}\bar{w},$ $b \mapsto \bar{u},$ and let $\rho
\colon F' = F(u , v , w) \to F(\bar{u} , \bar{w})$ be the
homomorphism defined by $u \mapsto \bar{u},$ $v \mapsto 1,$ $w
\mapsto \bar{w}.$ We will consider $F$ and $F'$ as subgroups of
$\Psi,$ and let $\varkappa_0 \colon \Phi \to \Aut F'$ denote the
homomorphism defined by sending $a , b$ to the restrictions of $\da
, \db$ to $F'.$ Then it follows from Lemma \ref{L:Pr3} that
$\mathrm{Ker}\: \rho$ (which is the normal subgroup of $F'$
generated by $v$) is invariant under $\varkappa_0(\Phi),$ and for $r
\in \Phi,$ the induced action of $\varkappa_0(r)$ on $F(\bar{u} ,
\bar{w})$ coincides with $\Int \phi(r)$ (cf. the proof of
Proposition \ref{P:P2}).

Let $M' = \rho^{-1}(\phi(\mathcal{M}))$ and $L' =
\rho^{-1}(\phi(\mathcal{L})),$ and set $M = M' \cap F$ and $L = L'
\cap F.$ Since $\mathcal{M}$ and $\mathcal{L}$ are normal subgroups
of $\Phi,$ we see that $\phi(\mathcal{M})$ and $\phi(\mathcal{L})$
are normal, hence $\varkappa_0(\Phi)$-invariant, subgroups of
$F(\bar{u} , \bar{w}).$ It follows that $M'$ and $L'$ are normal and
$\varkappa_0(\Phi)$-invariant subgroups of $F',$ and therefore $M$
and $L$ are normal and $\varkappa_0(\Phi)$-, or equivalently,
$\nu_0(\Phi)$-invariant subgroups of $F \cap F';$ besides, $[F \cap
F' : M]$ obviously divides $m.$

Set
$$
S = \bigcap_{g \in F} (gMg^{-1}), \ \ \  T = S \cap F^6[F , F]  \ \
\ \text{and} \ \ \ U = T^q[T , T].
$$
\begin{lemma}\label{L:E1}
{\rm (i)} $U$ is $(\mathrm{Aut}'\: F)$-invariant and is contained in $L;$

\vskip.5mm

\ {\rm (ii)} $[F : U]$ divides $36m^4 \cdot q^{36m^4 + 1};$

\vskip.5mm

{\rm (iii)} if $h \in \Phi$ is such that $\nu_0(h)$ acts on $F/U$ as
$\Int s$ for some $s \in S$ then $h \in \mathcal{M}.$
\end{lemma}
\begin{proof}
(i): By construction, $S$ is a normal $\nu_0(\Phi)$-invariant
subgroup of $F.$ So, it follows from (\ref{E:E5}) that it is
$(\mathrm{Aut}'\: F)$-invariant. Then $T$ and $U$ are also
$(\mathrm{Aut}'\: F)$-invariant. Since $S \subset M,$ we have
$\rho(S) \subset \phi(\mathcal{M}),$ implying that $\rho(U) \subset
\phi(\mathcal{L}),$ and therefore $U \subset L.$

\vskip1mm

(ii): The normalizer $N_F(M)$ contains contains $F \cap F',$ and
since $F/(F \cap F') \simeq (\Z/2\Z)^2,$ we see that $(F \cap F')/S$
embeds in a product of at most four copies of $(F \cap F')/M,$ hence
$[F : S]$ divides $4m^4.$ Taking into account that $$F^6[F , F] =
F^3[F , F] \cap F^2[F , F]$$ and that $S \subset F \cap F' = F^2[F ,
F],$ we conclude that $[F : T]$ divides $36m^4.$ Since $T/U \simeq
\F_q[F/T] \oplus \F_q,$ the index $[F : U]$ divides $36m^4 \cdot
q^{36m^4 + 1}.$

\vskip1mm

(iii): Since $s \in S \subset M',$ there exists $m \in \mathcal{M}$
such that $\phi(m) = \rho(s).$ Set $g = hm^{-1}.$ By (i), $U \subset
L,$ so the action of $\nu_0(h)$ on $(F \cap F')/L'$ coincides with
$\Int s.$ On the other hand, there are isomorphisms
$$
\Phi/\mathcal{L}  \simeq F(\bar{u} , \bar{w})/\phi(\mathcal{L})
\simeq F'/L',
$$
such that for $r \in \Phi,$ the action of $\Int r$ on
$\Phi/\mathcal{L}$ agrees with the action of $\Int \phi(r)$ on
$F(\bar{u} , \bar{w})/\phi(\mathcal{L}),$ and with that of
$\nu_0(r)$ on $F'/L'.$ So, the action of $\nu_0(m)$ on $F'/L'$
coincides with the action of $\Int s,$ and therefore $\nu_0(g)$ acts
on $(F \cap F')L'/L'$ trivially. Since the latter is a subgroup of
index $\leqslant 2$ in $F'/L',$ we conclude that $\Int g$ acts
trivially on a suitable subgroup $\mathcal{G}'$ of $\mathcal{G} =
\Phi/\mathcal{L}$ having index $\leqslant 2,$ and then by
(\ref{E:E3}), $g \in \mathcal{M}$.
\end{proof}

Now, set
$$
K = U \cap K_0 \ \ \text{where} \ \ K_0 = F^4[F , F].
$$
Clearly, $K$ is $(\mathrm{Aut}'\: F)$-invariant, and  since $U
\subset F \cap F' = F^2[F , F],$ it is easy to see that $[F : K]$
divides $144m^4 \cdot q^{36m^4 + 1}.$ We will now show that $K$ is
as required. As we mentioned earlier,
$$
\Gamma[K_0] = \theta^{-1}(SL_2(\Z , 4)) \subset \mathrm{Aut}'\: F,
$$
which implies that $\Gamma[K] \subset \mathrm{Aut}'\: F.$ So, to
complete the verification of (\ref{E:E2}), all we need to show is
that the image of $\pi(\Gamma[K])$ in $\Phi/\mathcal{M}$ is cyclic.
\vskip2mm

Now, let $g \in \Gamma[K].$ According to (\ref{E:E5}), we can write
$g = (\Int f) \cdot \nu_0(h)$ for some $f \in F,$ $h \in \Phi,$ and
then $\nu_0(h)$ acts on $F/K$ as $\Int f^{-1}.$ As we already
mentioned, $\nu_0(h)$ fixes $c = [x , y],$ so $fK \in C_{F/K}(cK),$
and therefore, by Lemma \ref{L:Pr25},  $f \in \la c \ra S.$ Consider
the subgroup $W \subset \mathrm{Aut}(F/K)$ formed by the automorphisms
induced by $\Int s$ with $s \in S,$ and let $\Omega$ be the preimage
of $W$ under the composite map $\Phi
\stackrel{\nu_0}{\longrightarrow} \Aut' F \longrightarrow
\mathrm{Aut}(F/K).$ Set
$$
\Theta = \Gamma[K] \cap \left(\Int F \rtimes \nu_0(\Omega)  \right).
$$
Then our argument shows that the quotient
$\pi(\Gamma[K])/\pi(\Theta)$ is cyclic. On the other hand, by Lemma
\ref{L:E1}(iii),  the image of $\pi(\Theta)$ in $\Phi/\mathcal{M}$
is trivial, and therefore the image of $\pi(\Gamma[K])$  is a cyclic
normal subgroup, as required.
\end{proof}

\vskip2mm

\noindent {\bf Remark 5.3.} 1. If $N$ is a normal subgroup of
$\Gamma$ satisfying $\Int F \subset N \subset \mathrm{Aut}'\: F$ and
such that the quotient $(\mathrm{Aut}'\: F)/N$ has no cyclic normal
subgroups then (\ref{E:E1}), and hence the entire argument, is valid
for $\mathcal{M} = \mathcal{N}$ (in other words, we can set $m =
n$). Then the resulting normal subgroup $K \subset F$ has index
dividing $144n^4 \cdot q^{36n^4 + 1},$ which is much smaller number
than the one given in the statement of the theorem.

\vskip1mm

2. One can somewhat improve the estimation given in the theorem by
choosing for $\mathcal{M}$ the pullback of $\left(\mathcal{N} /
\mathcal{N}^p[\mathcal{N} , \mathcal{N}]\right)^{\Phi/\mathcal{N}}$
(then $\mathcal{N}/\mathcal{M}$ is isomorphic to the augmentation
ideal in $\F_p[\Phi/\mathcal{N}]$) -- this would change $[\Phi :
\mathcal{M}]$ from $n \cdot p^{n + 1}$ to $n \cdot p^{n-1};$ similar
improvements are also possible in the construction of $\mathcal{L}.$
However, with these changes, our construction would become more
cumbersome and less explicit.

\vskip1mm

3. Any explicit procedure yielding a solution of the congruence
subgroup problem for those finite index normal subgroups of $\Gamma$
that contain $\Int F$ leads in fact to its solution for {\it all}
finite index normal subgroups. Indeed, let $N \subset \Gamma$ be an
arbitrary finite index normal subgroup, and let $Q \subset F$ be the
characteristic subgroup that corresponds to $(\Int F) \cap N$ under
the natural isomorphism $F \simeq \Int F.$ Pick a prime $p$ not
dividing $\vert F/Q \vert,$ and set $R = Q^p[Q , Q].$ As we noted
above, $C_{F/R}(Q/R) = Q/R,$ so if $\Int h \in \Gamma[R]$ then $hhomotopy
\in Q.$ By our assumption, we can explicitly find a normal subgroup
$K \subset F$ such that $\Gamma[K] \subset (N \cap \Gamma[R]) \cdot
\Int F.$ We claim that $\Gamma[K \cap R] \subset N.$ Indeed, a given
$g \in \Gamma[K \cap R]$ can be written in the form $g = s \cdot
\Int h$ with $s \in N \cap \Gamma[R]$ and $h \in F.$ Then $\Int h
\in \Gamma[R],$ so $h \in Q$ and therefore $\Int h \in (\Int F) \cap
N.$ It follows that $g \in N,$ as required. We observe, however,
that the described procedure leads to a rather cumbersome estimation
for the index $[F : K \cap R].$

\section{Topological connection}\label{S:T}

In the argument given in \cite{As}, the role of
$\varkappa_0$ in (\ref{E:Pr25}) is played by a homomorphism coming
from the following topological setting. Let $\Sigma^n_g$ be a closed
orientable surface of genus $g$ with $n$ punctures such that $2 - 2g
- n < 0,$ whose fundamental group will be denoted $\pi_1(g , n).$
Consider the configuration space $\mathrm{Conf}_2(\Sigma^n_g)$ of
ordered pairs of distinct points in $\Sigma^n_g.$ It was shown in
\cite{FN} that the natural projection $\mathrm{Conf}_2(\Sigma^n_g)
\to \Sigma^n_g$ is a (locally trivial) fibration with fiber
$\Sigma^{n+1}_g.$ Since $\pi_2(\Sigma^n_g) = 0,$ the exact sequence
of homotopy groups associated to a fibration assumes the form
$$
0 \to \pi_1(g , n+1) \longrightarrow
\pi_1(\mathrm{Conf}_2(\Sigma^n_g)) \longrightarrow \pi_1(g , n) \to
0.
$$
This sequence gives rise to a homomorphism $\rho_{g,n} \colon
\pi_1(g , n) \to \Out \pi_1(g , n +1).$ We will now show that under
appropriate identifications $\Phi \simeq \pi_1(0 , 3)$ and $F'
\simeq \pi_1(0 , 4),$  the homomorphism $\rho_{0,3}$ coincides with
composite map $\Phi \stackrel{\varkappa_0}{\longrightarrow} \Aut F'
\longrightarrow \Out F'.$

  We think of
  \(
    \SurfBot
  \)
  and
  \(
    \SurfTop
  \)
  as the fundamental groups of a plane with two and three punctures,
  respectively; i.e., in terms of punctured spheres,
  we move one of the punctures to infinity.
  We fix generators as follows:
  \[
    \rule[-1cm]{0cm}{2.5cm}
    \setlength{\unitlength}{1cm}%
    \begin{picture}(2,0)(0,1)
      \put(0.4,0){\includegraphics[scale=1.0]{loop.10}}
      \put(0,2){\ensuremath{\xgen}}
      \put(0,0.2){\ensuremath{\ygen}}
      \put(1.7,1.1){\ensuremath{\BasePt}}
    \end{picture}
    \text{\ \ \ for\ }
    \SurfBot
    =
    \GroupGenerated{\xgen,\ygen}
    \qquad\qquad
    \begin{picture}(2.7,0)(0,1)
      \put(0,0){\includegraphics[scale=1.0]{loop.20}}
      \put(1.4,2){\ensuremath{\agen}}
      \put(2.4,1){\ensuremath{\bgen}}
      \put(1.4,0.2){\ensuremath{\cgen}}
      \put(1.55,1.1){\ensuremath{\BasePt}}
    \end{picture}
    \text{\ \ \ for\ }
    \SurfTop
    =
    \GroupGenerated{\agen,\bgen,\cgen}
  \]
  Note that the base point $\BasePt$ chosen for $\SurfBot$ is among the
  punctures for $\SurfTop$.

  Of course, the names of
  the generators are chosen suggestively. So, we consider the
  isomorphisms
  \[
    \SurfBot  \longrightarrow \Phi,\qquad
    \xgen  \mapsto  a,\quad
    \ygen  \mapsto  b
  \]
  and
  \[
    \SurfTop  \longrightarrow  F',\qquad
    \agen  \mapsto  w,\quad
    \bgen  \mapsto  u,\quad
    \cgen  \mapsto  v
  \]
\begin{prop}\label{P:T1}
  The action of $\SurfBot$ on $\SurfTop$ induced by the fibration
  \[
    \Sphere{4} \rightarrow \Conf{3} \rightarrow \Sphere{3}
  \]
  is given by:
  \[
    \begin{array}{lll}
    \acts{\xgen}{\agen} = \ainv\binv\agen\bgen\agen, &
    \acts{\xgen}{\bgen} = \ainv\bgen\agen, &
    \acts{\xgen}{\bgen} = \cgen \\
    \acts{\ygen}{\agen} = \agen, &
    \acts{\ygen}{\bgen} = \cgen\bgen\cinv,&
    \acts{\ygen}{\bgen} = \cgen\bgen\cgen\binv\cinv
    \end{array}
  \]
  Consequently, with the above identifications, $\xgen$ acts exactly
  as $\varkappa_0(a) = \dot{\alpha}$, whereas $\ygen$ acts as
  $\operatorname{Int}(u^{-1}v^{-1}) \circ \dot{\beta}$. Therefore, the
  composite map
  \(
    \Phi
    \stackrel{\varkappa_0}{\longrightarrow}
    \operatorname{Aut} F' \longrightarrow \operatorname{Out} F'
  \)
  coincides with $\rho_{0,3}$.
\end{prop}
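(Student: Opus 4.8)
The plan is to identify $\rho_{0,3}$ with the point-pushing action associated to the Fadell--Neuwirth fibration and then to compute that action explicitly on the chosen generators; the proposition will then follow by comparison with Lemma~\ref{L:Pr3}.

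First I would recall the standard description of the connecting homomorphism of the fibration $\Sphere{4}\to\Conf{3}\to\Sphere{3}$ (projection onto the first point): the induced action of $\SurfBot=\pi_1(\Sphere{3},\BasePt)$ on the fiber group $\SurfTop=\pi_1(\Sphere{3}\setminus\{\BasePt\})$ is obtained by dragging the removed point $\BasePt$ once around a loop $\gamma$ based at $\BasePt$ and recording the resulting self-homeomorphism of $\Sphere{4}=\Sphere{3}\setminus\{\BasePt\}$, which is well defined up to isotopy --- this is exactly the recipe that produces $\rho_{0,3}$ from the homotopy exact sequence. Next I would fix a concrete planar model: $\Sphere{3}$ is the plane with two punctures and $\BasePt$ a generic point, with $\xgen,\ygen$ the pictured loops, so that $\Sphere{4}=\Sphere{3}\setminus\{\BasePt\}$ is the plane with three punctures and $\agen,\bgen,\cgen$ the pictured loops; the one feature of the identification that matters here is that $\cgen$ is the loop around the new puncture $\BasePt$, so that killing $\cgen$ re-fills that puncture and recovers $\SurfBot\cong\Phi$ --- the topological counterpart of the quotient $F'\twoheadrightarrow\bar F=F'/V$ of \S \ref{S:P2}.

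The heart of the proof is the explicit computation of the point-pushing of $\BasePt$ around $\xgen$ and around $\ygen$, carried out on this picture. For $\xgen$ one tracks how the three generating loops $\agen,\bgen,\cgen$ of $\pi_1(\Sphere{4})$ are dragged as $\BasePt$ traverses $\xgen$ and returns; the outcome is exactly the first row of the displayed formulas, and comparison with Lemma~\ref{L:Pr3} under $\agen\mapsto w$, $\bgen\mapsto u$, $\cgen\mapsto v$ identifies it with $\da|_{F'}=\varkappa_0(a)$ on the nose. The computation for $\ygen$ is analogous and gives the second row; here, because of the bookkeeping of the base point of $\pi_1(\Sphere{4})$ (a point near $\BasePt$, which must be carried back to its starting position once $\BasePt$ has closed up its loop), the resulting automorphism is not $\db|_{F'}$ itself but its conjugate $\Int(\bgen^{-1}\cgen^{-1})\circ(\db|_{F'})$, the inner twist recording the homotopy class of that return path and being immaterial modulo $\Int$. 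Since inner automorphisms are trivial in $\Out F'$, it follows that the composite $\Phi\xrightarrow{\varkappa_0}\Aut F'\longrightarrow\Out F'$ sends $a$ and $b$ to the classes of $\da|_{F'}$ and $\db|_{F'}$, which is precisely the image of $\rho_{0,3}$; hence $\rho_{0,3}$ coincides with this composite, as claimed.

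The step I expect to be the main obstacle is this explicit point-pushing calculation: honestly tracking the three generating loops of $\pi_1(\Sphere{4})$ as $\BasePt$ traverses $\xgen$ and $\ygen$, and pinning down the inner-automorphism correction in the $\ygen$-case (equivalently, the precise homotopy class of the return path of the fiber base point, and why no such correction appears for $\xgen$ --- an artifact of the particular loops in the figures). A convenient way to organize the bookkeeping, which I would adopt, is the observation that any point-pushing map induces the identity on $\pi_1(\Sphere{3})$, so on $\pi_1(\Sphere{4})$ it differs from the identity by a crossed homomorphism valued in the normal closure of $\cgen$; such a map is determined by its values on $\agen,\bgen,\cgen$, which reduces everything to evaluating those three words and makes the shape of the answer (conjugation by a lift of $\gamma$) transparent. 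One should also fix the orientation conventions carefully --- the sense in which the pictured loops run, and whether the monodromy is read off $\gamma$ or $\gamma^{-1}$ --- since these affect the precise words but not the final identification.
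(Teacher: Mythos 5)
Your proposal takes essentially the same route as the paper: interpret $\rho_{0,3}$ as the point-pushing (``push map'') action on $\pi_1(\Sphere{4})$, compute it on the generators $\agen,\bgen,\cgen$ by tracking the drag of loops around $\xgen,\ygen$, and compare with Lemma~\ref{L:Pr3} under $\agen\mapsto w$, $\bgen\mapsto u$, $\cgen\mapsto v$; you also correctly anticipate that $\xgen$ matches $\da$ exactly while $\ygen$ only matches $\db$ up to the inner factor $\Int(u^{-1}v^{-1})$, which is harmless in $\Out F'$. The piece you flag as the obstacle --- carrying out the picture-based loop computations --- is indeed the substance of the paper's proof (done there via explicit figures rather than any formula manipulation); your supplementary observation that point-pushing differs from the identity by a crossed homomorphism into the normal closure of $\cgen$ is a helpful sanity check not made explicit in the paper, and your caution about orientation conventions (reading off $\gamma$ versus $\gamma^{-1}$, to get a left action) matches the paper's own remark.
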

\begin{proof}
  The action of $\SurfBot$ on $\SurfTop$ induced by the
  fibration
  \[
    \Sphere{4}
    \longrightarrow
    \Conf{4}
    \longrightarrow
    \Sphere{3}
  \]
  is given by the ``push map'': Representing an element of
  $\SurfBot$ as a loop $\LoopBot$ based at $\BasePt$, its effect on
  an element of $\SurfTop$, also given as a loop $\LoopTop$,
  can be seen by pushing
  the base point $\BasePt$ along the inverse of the curve
  $\LoopBot$ and have it drag the loop $\LoopTop$ along.
  (Here the inverse is taken in order to obtain a left action.)

  Using this interpretation, we can read off this action on the generators
  and verify the first claim:
  \[
    \begin{array}{r@{\,=\,}l@{\kern 1cm}l}
      \acts{\xgen}{\agen} &\ainv\binv\agen\bgen\agen &
        \raisebox{-0.9cm}{\includegraphics[scale=1.0]{loop.31}} \\[1.5cm]
      \acts{\xgen}{\bgen}& \ainv\bgen\agen &
        \raisebox{-0.9cm}{\includegraphics[scale=1.0]{loop.32}} \\[1.5cm]
      \acts{\xgen}{\cgen}& \cgen &
        \raisebox{-0.9cm}{\includegraphics[scale=1.0]{loop.33}}
    \end{array}
    \qquad\qquad
    \begin{array}{r@{\,=\,}l@{\kern 1cm}l}
      \acts{\ygen}{\agen} & \agen &
        \raisebox{-0.9cm}{\includegraphics[scale=1.0]{loop.51}} \\[1.5cm]
      \acts{\ygen}{\bgen} & \cgen\bgen\cinv &
        \raisebox{-0.9cm}{\includegraphics[scale=1.0]{loop.52}} \\[1.5cm]
      \acts{\ygen}{\cgen} & \cgen\bgen\cgen\binv\cinv &
        \raisebox{-0.9cm}{\includegraphics[scale=1.0]{loop.53}}
    \end{array}
  \]
  Comparing this description to Lemma~\ref{L:Pr3}, we obtain our second claim.
  The last claim follows as $\rho_{0,3}$ is induced by the above action
  of $\SurfBot$ on $\SurfTop$.
\end{proof}

To interpret $\nu_0,$ one needs to consider the following complex
affine algebraic surface
$$
S = \{(s , t , \lambda) \in \C^3 \: \vert \: s^2 = t(t-1)(t -
\lambda), \ \lambda \neq 0, 1 \}.
$$
The projection to the $\lambda$-coordinate gives a fibration $S \to
\PP^1 \setminus \{0, 1, \infty\}$ (where $\PP^1$ is the complex
projective line). The fiber above the value
$\lambda \in \C \setminus \{ 0 , 1 \}$ is
$\EllFiber[\lambda]$ where $\EllCurve[\lambda]$ is the
elliptic curve given by $s^2 = t(t-1)(t - \lambda)$
and $O$ is the point at
infinity on $\EllCurve[\lambda]$
(thus, from the topological point of view, each fiber $\EllFiber[\lambda]$
is a once punctured torus $\Sigma^1_1$).
As above, we have the following exact sequence
$$
0 \to \pi_1(\Sigma_1^1) \longrightarrow \pi_1(S)
\longrightarrow \pi_1(\PP^1 \setminus \{0, 1, \infty\}) \to 0,
$$
which gives rise to a homomorphism $\theta \colon \pi_1(\PP^1
\setminus \{0, 1, \infty\}) \to \Out \pi_1(\Sigma_1^1)$
(cf. \cite{As}, 3.1). We will show in Proposition~\ref{P:T2},
that under
appropriate identifications $\Phi \simeq \pi_1(\PP^1 \setminus \{0,
1, \infty\})$ and $F \simeq \pi_1(\Sigma_1^1),$ this
homomorphism coincides with the composition
\(
  \Phi \xrightarrow{\nu_0} \Aut{F}
  \rightarrow \Out{F}
\)
(Incidentally, this also
provides a proof of the fact, mentioned and used on p. 145 of
\cite{As}, that $\mathrm{Im}\: \theta$ contains the congruence
subgroup $GL_2(\Z , 4),$ which is helpful as the reference given in
{\it loc. cit.} does not seem to contain this fact explicitly.)

Note that $\PP^1 \setminus \{0, 1, \infty\}$ is just the complex
plane punctured at $0$ and $1$. To talk about its fundamental group,
we designate $2$ to be the basepoint. We fix generators of its
fundamental group as follows:
\begin{center}
  \setlength{\unitlength}{1cm}
  \begin{picture}(5,3.1)(-0.5,0)
    \put(-0.125,0){\includegraphics[scale=1.0]{loop.61}}
    \put(0,-0.4){$0$}
    \put(2,-0.4){$1$}
    \put(4,-0.4){$2$}
    \put(-0.4,0.8){$\firstdown$}
    \put(1.8,0.8){$\seconddown$}
  \end{picture}
\end{center}

We also have to deal with the fibers $\EllFiber[\lambda]$.
To do that, we use the projection onto the $t$-coordinate.
It defines a two sheeted cover
\(
  \EllFiber[\lambda] \rightarrow \C
\)
branched above the points $0$, $1$, and $\lambda$. It will be convenient
to be more explicit in the case $\lambda=2$.
The surface $\EllFiber[2]$ is a torus with one puncture (far right in the
pictures). We use $\lambda=2$ as the basepoint $\Green$.
The point $0$ is on the outside left and the point $1$ is
on the inside.
\[
  \raisebox{-2.15cm}{\includegraphics[scale=1.0]{loop.81}}
  \setlength{\unitlength}{1cm}%
  \begin{picture}(0,0)
    \put(-5.0,0){\ensuremath{\BlueOne}}
    \put(-3.0,0){\ensuremath{\BlueTwo}}
    \put(-2.0,0){\ensuremath{\Green}}
  \end{picture}
  \qquad\longrightarrow\qquad
  \includegraphics[scale=1.0]{loop.82}
  \begin{picture}(0,0)
    \put(-6.15,-0.35){\ensuremath{\BlueOne}}
    \put(-4.65,-0.35){\ensuremath{\BlueTwo}}
    \put(-3.15,-0.35){\ensuremath{\Green}}
  \end{picture}
\]
The projection onto the $t$-plane identifies two points if they
are equivalent under the $180$-degree rotation about the axis running
through the colored points. We will think of the
of the surface $\EllFiber[2]$ as a two sheeted cover of the complex
plane branched above ``slits'' (along the real line) from $0$ to $1$ and
from $2$ to $\infty$. The two small circles on the torus are the
preimages of the slits. Thus, the top of the torus is one sheet, and
the bottom of the torus is the other sheet. Moreover, we adopt the
convention that the top-front and bottom-back correspond to the upper
half plane and the top-back and bottom-front project onto the lower
half plane.

Note that $\Out \UpGroup$ is naturally isomorphic to the group
of $\Z$-linear automorphisms of $\FirstHomology$.
Thus, it suffices to study the action of $\DownGroup$ on the
homology of $\EllFiber[2]$. We start by choosing a basis.
In the following
pictures, dashed lines run through the back of the punctured torus.
We also provide the image in the $t$-plane.
\[
  \begin{array}{r@{\,\,\,=\,\,\,\,}c@{\,\,\,\,\,\,\mapsto\,\,\,\,\,\,\,}c}
  \firstup & \raisebox{-2.2cm}{\includegraphics[scale=1.0]{loop.911}}
  &   \setlength{\unitlength}{1cm}
  \begin{picture}(4,3.0)(-0.5,0.3)
    \put(-0.25,0){\includegraphics[scale=1.0]{loop.912}}
    \put(0,-0.4){$0$}
    \put(1.5,-0.4){$1$}
    \put(3,-0.4){$2$}
  \end{picture}\\
  \secondup & \raisebox{-2.2cm}{\includegraphics[scale=1.0]{loop.921}} &
  \setlength{\unitlength}{1cm}
  \begin{picture}(4,1.0)(-0.5,0.3)
    \put(0,-0.7){\includegraphics[scale=1.0]{loop.922}}
    \put(0,-0.2){$0$}
    \put(1.5,-0.2){$1$}
    \put(3,-0.2){$2$}
  \end{picture}\\
  \end{array}
\]
The pictures in the $t$-plane are ambiguous. The lift $\firstup$
is given by the rule that the crossing of the slit from $0$ to $1$
lifts to a change of sheets from the top to the bottom whereas the
lift $\secondup$ follows the converse convention.

\begin{prop}\label{P:T2}
  The action of $\DownGroup$ on $\FirstHomology$ induced by the
  fibration
  \[
    \EllFiber[2] \longrightarrow S
    \longrightarrow \PP^1 \setminus \{0, 1, \infty\}
  \]
  is given by:
  \[
    \begin{array}{ll}
    \acts{\firstdown}{\firstup} = \firstup,&
    \acts{\firstdown}{\secondup} = \secondup+2\firstup \\
    \acts{\seconddown}{\firstup} = \firstup+2\secondup,&
    \acts{\seconddown}{\secondup} = \secondup \\
    \end{array}
  \]
  We use the obvious identifications of
  $\DownGroup=\GroupGenerated{\firstdown,\seconddown}=
  \GroupGenerated{a,b}=\Phi$ and
  $\FirstHomology=\Z\firstup\oplus\Z\secondup=
  \GroupGenerated{x,y}^{\text{ab}}=F^{\text{ab}}$ as
  indicated by the letters. Thus, with these identifications,
  the homomorphism $\theta$ coincides with
  $\Phi\xrightarrow{\nu_0}\Aut F \rightarrow \Out F$.
\end{prop}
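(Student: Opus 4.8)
The plan is to reduce Proposition~\ref{P:T2} to the four monodromy identities it displays and to compute those via the double-cover description of the Legendre fibers; the comparison with $\nu_0$ is then immediate. As recorded just before the statement, $\Out\UpGroup$ is naturally $\aut\FirstHomology$, so $\theta$ is determined by the action of $\DownGroup$ on $\FirstHomology$; and once one knows $\acts{\firstdown}{\firstup}=\firstup$, $\acts{\firstdown}{\secondup}=\secondup+2\firstup$, $\acts{\seconddown}{\firstup}=\firstup+2\secondup$ and $\acts{\seconddown}{\secondup}=\secondup$, the final assertion follows: under the fixed identifications, $\alpha=\nu_0(a)$ sends $x\mapsto x$, $y\mapsto yx^2$, hence induces $\firstup\mapsto\firstup$, $\secondup\mapsto\secondup+2\firstup$ on $F^{\mathrm{ab}}$, while $\beta=\nu_0(b)$ sends $x\mapsto xy^2$, $y\mapsto y$, hence induces $\firstup\mapsto\firstup+2\secondup$, $\secondup\mapsto\secondup$ -- precisely the matrices obtained for $\theta(\firstdown)$ and $\theta(\seconddown)$. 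Thus everything comes down to the four displayed formulas.

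To obtain them I would realize each fiber $\EllFiber[\lambda]$ as the two-sheeted cover of the $t$-line branched over $\{0,1,\lambda,\infty\}$, with $\firstup,\secondup$ the explicit lifts fixed above (supported over the slits $[0,1]$ and $[2,\infty]$). Parallel transport of the fibration $\EllFiber[2]\to S\to\DownSpace$ along the generator $\firstdown$ -- a loop based at $2$ winding once around $0$ and not around $1$ -- drags the branch point $t=\lambda$ once around $t=0$ while fixing $0,1,\infty$; this is a point-pushing self-homeomorphism of $(\PP^1,\{0,1,2,\infty\})$. Completing the square in $s^2=t(t-1)(t-\lambda)$ near the node of the fiber over $\lambda=0$ puts the total space there in the form $s^2+u^2=\lambda^2/4+\cdots$, so the family near $\lambda=0$ is the pullback of a smooth standard nodal degeneration under a map ramified to order two at $\lambda=0$; consequently the lift of the above point-push to $\EllFiber[2]$ is the \emph{square} of the Picard--Lefschetz Dehn twist about the vanishing cycle. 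Carrying the vanishing cycle of the collision $\{0,\lambda\}$ back to $\lambda=2$ and using the slit/sheet conventions identifies it up to sign with $\firstup$; the Picard--Lefschetz formula together with $\firstup\cdot\secondup=\mp1$ then gives $\acts{\firstdown}{\firstup}=\firstup$ and $\acts{\firstdown}{\secondup}=\secondup+2\firstup$. The computation for $\seconddown$ is identical with $t=\lambda$ pushed once around $t=1$, the vanishing cycle now being $\pm\secondup$, yielding $\acts{\seconddown}{\secondup}=\secondup$ and $\acts{\seconddown}{\firstup}=\firstup+2\secondup$. Alternatively, and in the spirit of Proposition~\ref{P:T1}, one can simply follow the pictures, drawing how $\firstup$ and $\secondup$ are dragged as the branch point travels around $0$ (resp.\ $1$) and reading the resulting homology classes off directly.

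I expect the main obstacle to be the bookkeeping in this last step: one must verify that the vanishing cycle of $\{0,\lambda\}$, transported to the basepoint $\lambda=2$, is genuinely $\pm\firstup$ rather than, say, $\pm\secondup$ or $\pm(\firstup+\secondup)$ (and dually that the vanishing cycle of $\{1,\lambda\}$ is $\pm\secondup$), and that every sign comes out as stated. This is exactly what the explicit choices of slits, sheets and orientations in the pictures are designed to settle; the coefficient $2$ is also visible there directly, since each completed loop of the moving branch point pulls the relevant cycle across a slit twice. As a consistency check, $\langle\theta(\firstdown),\theta(\seconddown)\rangle$ is then the subgroup $H\subset SL_2(\Z)$ of \S\ref{S:P}, which via \cite{Sa} recovers the inclusion $\im\theta\supseteq GL_2(\Z,4)$ used there and mentioned in the paragraph preceding the statement.
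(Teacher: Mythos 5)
Your reduction is correct and matches the paper's: $\Out\UpGroup\cong\aut\FirstHomology$, so everything comes down to the four matrix entries, and those entries coincide with the action of $\alpha,\beta$ on $F^{\mathrm{ab}}$, giving the final claim. Your route (b) --- read the action off the pictures via the push map --- is exactly what the paper's proof does: it first establishes the recipe (drag a fiber loop through nearby fibers so its shadow traces $\DownPath^{\operatorname{rev}}$, observe this is the push map on the $t$-plane, and fix the ambiguous lift by tracking whether a marked point changes sheets, i.e.\ whether $\lambda$ winds around its $t$-projection an odd number of times), and then draws and simplifies the four pictures.

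Your route (a), via Picard--Lefschetz, is a genuinely different and in one respect cleaner argument. The observation that the completed square puts the local model in the form $s^2+u^2=\lambda^2/4+\cdots$, so that the family near $\lambda=0$ is the order-$2$ ramified pullback of the standard nodal degeneration, explains the coefficient $2$ conceptually as ``square of a Dehn twist'' (equivalently, the discriminant of $t(t-1)(t-\lambda)$ vanishes to order $2$ in $\lambda$), whereas in the pictorial approach the $2$ only emerges after the loop is seen to cross a slit twice. That said, you explicitly defer the decisive steps --- that the vanishing cycle for the $\{0,\lambda\}$ collision, transported to the basepoint $\lambda=2$, is $\pm\firstup$ rather than some other primitive class, and that the signs match --- to ``the pictures.'' But identifying the transported vanishing cycle with $\firstup$ is not automatic: the answer depends on the choice of transport path in the $\lambda$-plane (which must dodge $t=1$), and a different dodge would produce a conjugate class. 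The paper's push-map proof resolves precisely this point, and the analogous care would still be needed to complete route (a). So: the framing is correct, the Picard--Lefschetz packaging is a nice conceptual add, but the sign/identification step you flag as the ``main obstacle'' is the actual content of the paper's proof, and the proposal leaves it open.
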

\begin{proof}
  Suppose we have a commutative diagram of continuous maps
  \[
    \xymatrix{
      {\TheAnnulus}
      \ar[r]^{\TheHomotopy}
      \ar[d]_{\pi_2}
      &
      {\TotalSpace}
      \ar[d]^{{\pi_\lambda}}
      \\
      {\TheInterval}
      \ar[r]_{\DownPath}
      &
      {\BaseSpace}
    }
  \]
  I.e., $\TheHomotopy$ is a homotopy of loops in $\TotalSpace$
  each of which runs within a fiber. Assume further, that
  $\DownPath(0)=\DownPath(1)=2$, i.e., $\DownPath$ is a closed
  loop in $\BaseSpace$ based at the basepoint. Then $\TheHomotopy$
  is a homotopy in $\TotalSpace$ connecting two loops in
  $\FiberSpace$. Note that the composite curve
  \(
    \TheHomotopy(0,\cdot) \concat
    \TheHomotopy(\cdot,1) \concat
    \TheHomotopy(0,\cdot)^{\operatorname{rev}}
  \)
  is homotopic to the loop
  \(
    \TheHomotopy(\cdot,0)
  \)
  in $\TotalSpace$. Since $\TheHomotopy(0,\cdot)$ is a lift of
  $\DownPath$, the concatenation represents
  the action of $\DownPath$ on the curve $\TheHomotopy(\cdot,1)$
  by conjugation. Thus
  \(
    \TheHomotopy(\cdot,0) =
    \DownPath( \TheHomotopy(\cdot,1) ),
  \)
  i.e., to compute the effect of a loop $\DownPath$ in
  $\BaseSpace$ on a loop $\UpPath$ in $\FiberSpace$, we have to
  move $\UpPath$ inside $\TotalSpace$ continuously so that (a) at
  each time the curve stays within a single fiber and (b)
  so that the shadow cast by the motion in $\BaseSpace$ traces
  the path $\DownPath^{\operatorname{rev}}$.
  As we are only interested in the action on homology, we can
  consider free loops.

  Now, we have to apply this recipe to the generators. To better
  visualize the process, we use the pictures in the $t$-plane.
  Suppose $\UpPath_\lambda$ is a loop in $\EllFiber[\lambda]$
  whose image in the $t$-plane avoids the branch points $0$,
  $1$, and $\lambda$. Then
  \(
    s^2 = t(t-1)(t-\lambda)
  \)
  is bounded away from $0$ along the compact loop $\UpPath_\lambda$,
  and we can push this
  loop continuously into nearby fibers without changing the image
  in the $t$-plane at all. That allows us to drag $\lambda$ along
  sufficiently small intervals on $\DownPath$; and
  whenever $\lambda$ would run into the $t$-image
  of the curve, we deform the curve within the fiber to avoid
  the collision. This way, we keep $s\neq 0$ at all times
  and thereby do not
  loose control over the lift. Note that deforming a curve in the
  $t$-plane as $\lambda$ traces $\DownPath^{\operatorname{rev}}$
  so as to avoid a collision is the
  same as the ``push map'' from the proof of Proposition~\ref{P:T1}.

  At the end of the process,
  we have to chose between the two possible lifts.
  We will rig the process so that one particular
  point on the initial curve stays put all time (this is easy, we just
  have to make sure that $\lambda$ does not run into its $t$-projection).
  Assume that this point has $t$-coordinate $t_0$.
  From
  \(
    s^2 = t_0(t_0-1)(t_0-\lambda),
  \)
  we see that the lift of this point changes sheets if and only if
  the path that $\lambda$ traces winds around $t_0$ an odd number
  of times. This determines the lift.

  Note that the push map is trivial in the pictures for
  $\firstdown(\firstup)$ and $\seconddown(\secondup)$:
  \[
    \includegraphics[scale=1.0]{loop.931}
    \qquad\qquad
    \includegraphics[scale=1.0]{loop.932}
  \]
  Thus, $\firstdown(\firstup)=\firstup$ and
  $\seconddown(\secondup)=\secondup$.

  We now compute $\firstdown(\secondup)$, i.e., we have to figure out
  the effect of the push map in the following picture:
  \[
    \includegraphics[scale=1.0]{loop.951}
  \]
  The black dot represents the point that we keep fixed along the
  transformation. Since the path $\firstdown$ surrounds this point,
  we will encounter a change of sheets, which we will have to take into
  account at the end when we determine the lift.

  In the $t$-plane, the result after applying the push map is first
  as follows:
  \[
    \includegraphics[scale=1.0]{loop.952}
  \]
  We can simplify this by performing some obvious shortening homotopies,
  still keeping the marked point fixed:
  \[
    \includegraphics[scale=1.0]{loop.953}
  \]
  Drawing the lift in $\FiberSpace$, we have to remember that the
  marked point underwent a change of sheets, i.e., it now corresponds
  to a point in the bottom back of the torus:
  \[
    \includegraphics[scale=1.0]{loop.954}
  \]
  From this picture, we can read off that
  $\firstdown(\secondup)=\secondup+2\firstup$.

  To compute $\seconddown(\firstup)$, we have to determine the action
  of the push map in the following picture:
  \[
    \includegraphics[scale=1.0]{loop.941}
  \]
  The black dot marks the point that we keep fixed. Note that the generator
  $\seconddown$ does not wind around it. After a simplifying
  homotopy, we get the following lift
  \[
    \begin{array}{r@{\,\,\,=\,\,\,\,}c@{\,\,\,\,\,\,\mapsto\,\,\,\,\,\,\,}c}
      \firstup+2\secondup & \raisebox{-2.2cm}{\includegraphics[scale=1.0]{loop.942}}
      &   \setlength{\unitlength}{1cm}
      \begin{picture}(4,2.0)(-0.5,1.73)
        \put(-0.3,0){\includegraphics[scale=1.0]{loop.943}}
      \end{picture}
    \end{array}
  \]
  which shows that $\seconddown(\firstup)=\firstup+2\secondup$.
\end{proof}

\vskip2mm

To prove the Main Theorem, Asada actually constructs, using
anabelian geometry, certain lifts $$\rho_0^* \colon \widehat{\pi_1(0
, 3)} \to \Aut \widehat{\pi_1(0 , 4)} \ \ \text{and} \ \
\rho_{\mathcal{K}} \colon \widehat{\pi_1(\PP^1 \setminus \{0, 1,
\infty\})} \to \Aut \widehat{\pi_1(E \setminus \{ O \})},$$ for the
homomorphisms that can be identified with the homomorphisms
\begin{equation}\label{E:T10}
\widehat{\pi_1(0 , 3)} \to \Out \widehat{\pi_1(0 , 4)} \ \
\text{and} \ \ \widehat{\pi_1(\PP^1 \setminus \{0, 1, \infty\})} \to
\Out \widehat{\pi_1(E \setminus \{ O \})}
\end{equation}
induced by the homomorphisms of the (discrete) fundamental
groups described
above (in his set-up, $\rho_{\mathcal{K}}$ is defined on a certain
index two subgroup of $\widehat{\pi_1(\PP^1 \setminus \{0, 1,
\infty\})},$ but the passage to this subgroup is not necessary, cf.
the remark prior to Lemma \ref{L:Pr3}). The Main Theorem easily
follows from the fact that $\rho_{\mathcal{K}}$ is injective, and
for this Asada argues that interpreting $\Sigma^3_0$ as $\PP^1
\setminus \{0, 1, \infty\},$ we will have
\begin{equation}\label{E:T11}
\mathrm{Ker}\: \rho_0^* \vert \Theta = \mathrm{Ker}\:
\rho_{\mathcal{K}} \vert \Theta,
\end{equation}
where $\Theta$ is the normal subgroup of $\widehat{\pi_1(0 , 3)} =
\widehat{\pi_1(\PP^1 \setminus \{0, 1, \infty\})}$ generated (as a
closed normal subgroup) by one of the generators of the latter. Then
the injectivity of $\rho_{\mathcal{K}}$ is derived from the injectivity of
$\rho_0^*$ which is provided by Theorem 3B in \cite{As}. Our
Propositions \ref{P:T1} and \ref{P:T2} show that $\varkappa$ and $\nu$
can be taken as the lifts of the homomorphisms in (\ref{E:T10}).
Then the commutativity of (\ref{E:Pr55}) in conjunction with Lemma
\ref{L:PF4} immediately yields that $\mathrm{Ker}\: \varkappa =
\mathrm{Ker}\: \nu,$ which is stronger than (\ref{E:T11}) and yields
the desired injectivity of $\nu$ much quicker.

\bigskip
\paragraph{\textbf{Acknowledgments}}
We are grateful to Pavel Zalesskii and to the anonymous referee for useful comments.
The third-named author would like to 
acknowledge that he was introduced to Asada's paper by Fritz Grunewald.

\bibliographystyle{amsplain}

\end{document}